\documentclass[12pt]{elsarticle}
\usepackage{mystyleElsArticle}
\usepackage{mathrsfs}
\usepackage{booktabs}

%
%

\footskip = 20pt
\headsep  = 20pt
\usepackage[margin=0.93 in]{geometry}    

\renewcommand\d{\mathrm{d}}

\newcommand{\veps}{\varepsilon}
\def\epsilon{\varepsilon}
\def\bx{\textbf{x}}
\def\bc{\textbf{c}}
\newcommand{\secref}[1]{\S\ref{#1}}

\newtheorem{example}[theorem]{Example}

\makeatletter
\def\ps@pprintTitle{%
	\let\@oddhead\@empty
	\let\@evenhead\@empty
	\let\@oddfoot\@empty
	\let\@evenfoot\@oddfoot
}
\makeatother

\begin{document}
	
	\begin{frontmatter}
		\title{A multiscale reduced basis method for Schr\"{o}dinger equation with multiscale and random potentials}
		\author[SoochowUniv]{Jingrun Chen}
		\ead{jingrunchen@suda.edu.cn}
		\author[hku]{Dingjiong Ma}
		\ead{martin35@hku.hk}
		\author[hku]{Zhiwen Zhang\corref{cor1}}
		\ead{zhangzw@hku.hk}
		
		\address[SoochowUniv]{Mathematical Center for Interdisciplinary Research and School of Mathematical Sciences, Soochow University, Suzhou, China.}
		\address[hku]{Department of Mathematics, The University of Hong Kong, Pokfulam Road, Hong Kong SAR, China.}
		\cortext[cor1]{Corresponding author}		
		
\begin{abstract}
\noindent	
The semiclassical Schr\"{o}dinger equation with multiscale and random potentials often appears when studying electron dynamics in heterogeneous quantum systems.
As time evolves, the wavefunction develops high-frequency oscillations in both the physical space and the random space, which poses severe challenges
for numerical methods. In this paper, we propose a multiscale reduced basis method, where we construct multiscale reduced basis functions using an optimization method and the proper orthogonal decomposition method in the physical space and employ the quasi-Monte Carlo method in the random space. 
Our method is verified to be efficient: the spatial gridsize is only proportional to the semiclassical parameter and the number of samples 
in the random space is inversely proportional to the same parameter. Several theoretical aspects of the proposed method, including how to determine the number of samples in the construction of multiscale reduced basis and convergence analysis, are studied with numerical justification. In addition, we investigate the Anderson localization phenomena for Schr\"{o}dinger equation with correlated random potentials in both 1D and 2D.
  \\
\noindent{\textbf{Keyword}:} random Schr\"{o}dinger equation; multiscale reduced basis function;  optimization method; quasi-Monte Carlo method; Anderson localization.\\
\noindent{{\textbf{AMS subject classifications.}}~35J10, 35Q41, 65M60, 65K10, 74Q10.}
\end{abstract}
\end{frontmatter}
\section{Introduction} \label{sec:introduction}
\noindent
The semiclassical Schr\"{o}dinger equation describes electron dynamics in the semiclassical regime. Applications of such an equation
can be found in Bose-Einstein condensation, graphene, semiconductors, topological insulators, etc. When propagating in a (quasi-)periodic microstructure,
electrons experience a multiscale potential. As a consequence, the electron wavefunction develops high-frequency oscillations, which poses severe challenges
from the numerical perspective. Brute-force methods are very costly and asymptotics-based methods have been proposed in the literature; see \cite{jin2011mathematical}
for review and references therein.

In \cite{Anderson:58}, Anderson proposed to study localized eigenstates in a tight-binding model with random potentials. This model was soon to be generalized
to the random Schr\"{o}dinger equation, i.e., the Schr\"{o}dinger equation with a random potential. In this case, electrons are found to be localized 
provided that the strength of randomness is sufficiently large. The randomness can be realized in an experiment by enhancing the disorder of impurities
in a material. Due to the importance of this model, Anderson was awarded the Nobel Prize in physics in 1977. In the presence of multiscale and random
potentials, the electron wavefunction develops high-frequency oscillations in both the physical space and the random space, making numerical approximations
even more difficult.

In this paper, we study the following Schr\"{o}dinger equation with random potential in the semiclassical regime
\begin{equation}
\left\{
\begin{aligned}
i\epsilon\partial_t\psi^\epsilon&=-\frac{\epsilon^2}{2}\Delta\psi^\epsilon+v^{\epsilon}(\bx,\omega) \psi^\epsilon, \quad \bx\in D,  \quad \omega\in\Omega, \quad t\in \mathbb{R},\\
\psi^\epsilon &\in H_{\textrm{P}}^{1}(D), \quad \omega\in\Omega, \quad t\in \mathbb{R},\\
\psi^\epsilon|_{t=0}&=\psi_{\textrm{in}}(\bx),\quad \bx\in D,
\end{aligned}
\right.
\label{eqn:Sch}
\end{equation}
where $ 0<\epsilon \ll1 $ is an effective Planck constant describing the microscopic and macroscopic scale ratio, $ d $ is the spatial dimension, $ v^{\epsilon}(\bx,\omega) $ is the given random potential, $\psi^\epsilon = \psi^\epsilon(t,\bx,\omega) $ is the electron wavefunction, and $ \psi_{\textrm{in}}(\bx) $ is the initial data.  Here $ D = [0,1]^d $ is the spatial domain and $ H_{\textrm{P}}^{1}(D) = \{\psi |\psi\in H^1(D) \textrm{ and } \psi \textrm{ is periodic over D} \}$. 

Equation \eqref{eqn:Sch} can be used to model electron transport in a disordered medium in a single-electron picture where the electron interaction is ignored. It is customary to write the semiclassical Schr\"{o}dinger equation and the multiscale and random potential with a single parameter $\epsilon$. But there is no reason that the parameter of the multiscale and random potential should be the same as the semiclassical parameter; see \secref{sec:NumericalExamples} for details on the parameterization of the multiscale and random potential $v^{\epsilon}(\bx,\omega)$.


The existence of Anderson localization is closely related to the electron wavefunction in \eqref{eqn:Sch}. To be specific, assume $\psi^\epsilon(t,\bx,\omega)$ has zero mean with respect to the measure $\rho$ induced by $v^{\epsilon}(\bx,\omega)$ and denote $A(t)=\mathbb{E}[\int_{R^d}|\bx|^2|\psi^{\epsilon}|^2 d\bx]_{\rho}$ the second-order moment of the position density. When the strength of disorder is small, an electron undergoes a diffusion process with $A(t)=2C_{d}t$, $C_{d}>0$. In the presence of a strong disorder, however, $A(t)$ converges to a time-independent quantity, i.e., $\lim_{t\rightarrow \infty}A(t)=C$, which implies the localization of the electron and the system undergoes a metal-insulator transition \cite{mott1990metal,Erdos:10}. When $d=1$, localization always occurs for \eqref{eqn:Sch} with random potential \cite{Anderson:58}. When $d\geq 2$, the situation becomes complicated. Some analytical results show that localization occurs when the strength of disorder is large \cite{frohlich1983absence,aizenman1993localization}. This motivates us to study Anderson localization
in the presence of correlated random potentials \cite{nosov2019correlation}.

When the potential is deterministic, i.e., $v^{\epsilon}(\bx,\omega) = v^{\epsilon}(\bx)$, many numerical methods have been proposed; see \cite{bao2002time, faou2006poisson, tanushev2007mountain, jin2008gaussian, faou2009computing, delgadillo2016gauge, chen2019multiscale} for example. 
When the potential is random, few works have been done; see \cite{wu2016bloch,jin2019gaussian}.
As mentioned above, the major difficulty is that the wavefunction $\psi^\epsilon$ develops high-frequency oscillations in both the physical space and the random space, which requires tremendous computational resources.

Our work is motivated by the multiscale finite element method (FEM) for solving elliptic problems with multiscale coefficients \cite{hou1999convergence,efendiev2009multiscale}. The multiscale FEM is capable of correctly capturing the large scale components of the multiscale solution on a coarse grid without accurately resolving all the small scale features in the solution. This is accomplished by incorporating the local microstructures of the differential operator into the multiscale FEM basis functions. Recently, several relevant works on constructing localized basis functions that approximate the elliptic operator with heterogeneous coefficients have been proposed. In \cite{maalqvist2014localization}, Malqvist and Peterseim construct localized multiscale basis functions using a modified variational multiscale method. The exponentially decaying property of these modified basis has been shown both theoretically and numerically. Meanwhile, Owhadi \cite{Owhadi:2015,owhadi2017multigrid} reformulates the multiscale problem from the perspective of decision theory using the idea of gamblets as the modified basis. Hou et.al. \cite{hou2017sparse} extend these works such that localized basis functions can also be constructed for higher-order strongly elliptic operators.  Recently, Hou, Ma, and Zhang propose to build localized multiscale stochastic basis to solve elliptic problems with multiscale and random coefficients \cite{hou2018model}.

In this paper, we propose a multiscale reduced basis method to solve the Schr\"{o}dinger equation with random potentials in the semiclassical regime. Our method consists of offline and online stages. In the offline stage, we apply an optimization approach to systematically construct localized multiscale reduced basis functions on each patch associated with each coarse gridpoint. These basis functions provide nearly optimal approximation to the random Schr\"{o}dinger operator. 
In the online stage, we use these basis functions to approximate the physical space of the solution and the quasi-Monte Carlo (qMC) method to approximate the random space of the solution, respectively. We find the proposed method is efficient in the sense that
the number of basis functions is only proportional to $\veps$ and the number of samples in qMC is inversely proportional to $\veps$.
Under some conditions, we conduct the convergence analysis of the proposed method with numerical verifications. 
Moreover, we study how to determine the number of samples in qMC such that the corresponding multiscale reduced basis functions provide accurate approximation
of the solution space. Finally we investigate the existence of Anderson localization for correlated random potentials.

The rest of the paper is organized as follows. For completeness, in \secref{sec:DeterministicCase}, we introduce multiscale basis functions for the deterministic Schr\"{o}dinger equation in semiclassical regime and discuss some properties of the basis functions. In \secref{sec:RandomCase}, we propose a multiscale reduced basis method to solve the random Schr\"{o}dinger equation. Analysis results are presented in \secref{sec:analysis} and numerical experiments, including both 1D and 2D examples, are conducted to demonstrate the convergence and efficiency of the proposed method in \secref{sec:NumericalExamples}. Conclusions and discussions are drawn in \secref{sec:Conclusion}.
  
\section{Multiscale basis functions for deterministic Schr\"{o}dinger equations} \label{sec:DeterministicCase}
\noindent
In this section, we briefly review the construction of multiscale basis functions based on an optimization approach to solve the Schr\"{o}dinger equation with a deterministic potential. Some properties of the multiscale basis functions are also given.
	
\subsection{Construction of multiscale basis functions} \label{sec:OC}
\noindent
In the deterministic case, we consider the following problem
\begin{equation}
\left\{
\begin{aligned}
i\epsilon\partial_t\psi^\epsilon&=-\frac{\epsilon^2}{2}\Delta\psi^\epsilon+v^{\epsilon}(\bx) \psi^\epsilon,\quad \bx\in D,\quad t\in \mathbb{R},\\
\psi^\epsilon &\in H_{\textrm{P}}^{1}(D),\\
\psi^\epsilon|_{t=0}&=\psi_{\textrm{in}}(\bx).
\end{aligned}
\right.
\label{Sch}
\end{equation}
$ \psi_{\textrm{in}}(\bx) $ is the initial data over $D$. Defining the Hamiltonian operator $\mathcal{H}(\cdot)\equiv -\frac{\epsilon^2}{2}\Delta(\cdot)+v^{\epsilon}(\bx)(\cdot) $ and introducing the following energy notation $ ||\cdot||_V $ for Hamiltonian operator
\begin{align}\label{eqn:energynorm}
||\psi^\epsilon||_V=\frac12(\mathcal{H}\psi^\epsilon,\psi^\epsilon)=\frac12\int_{D}\frac{\epsilon^2}{2}|\nabla \psi^\epsilon|^2+v^{\epsilon}(\bx)|\psi^\epsilon|^2 \d\bx.
\end{align}
Note that \eqref{eqn:energynorm} does not define a norm since $v^{\epsilon}$ usually can be negative, and thus the bilinear form associated to this notation is not coercive, which is quite different from the case of elliptic equations. However, this does not mean that available approaches \cite{hou1997multiscale, babuska2011optimal, maalqvist2014localization, owhadi2017multigrid, hou2017sparse} cannot be used for the Schr\"{o}dinger equation. In fact, we shall utilize the similar idea to construct localized multiscale basis functions on a coarse mesh by an optimization approach using the above energy notation $ ||\cdot||_V $ for the Hamiltonian operator.

To construct such localized multiscale basis functions,  we first partition the physical domain $D$ into a set of regular coarse elements with mesh size $H$. For example, we divide $D$ into a set of non-overlapping triangles $\mathcal{T}_{H}=\cup\{K\}$, such that no vertex of one triangle lies in the interior of the edge of another triangle. On each element $K$, we define a set of nodal basis $\{\varphi_{j,K},j=1,...,k\}$ with $k$ being the number of nodes of the element. From now on, we neglect the subscript $K$ for notational convenience. The functions $\varphi_{i}(\bx)$ are called measurement functions, which are chosen as the characteristic functions on each coarse element in \cite{hou2017sparse,owhadi2017multigrid} and piecewise linear basis functions in \cite{maalqvist2014localization}. In \cite{li2017computing,hou2018model}, it is found that the usage of FEM nodal basis functions reduces the approximation error and thus the same setting is adopted in the current work.
	
Let $\mathcal{N}$ denote the set of vertices of $\mathcal{T}_{H}$ (removing the repeated vertices due to the periodic boundary condition) and $N_{H}$ be the number of vertices. For every vertex $\bx_i\in\mathcal{N}$, let $\varphi^{H}_{i}(\bx)$ denote the corresponding nodal basis function, i.e., $\varphi^{H}_{i}(\bx_j)=\delta_{ij}$. Since all the nodal basis functions $\varphi_{i}(\bx)$ are continuous across the boundaries of the elements, we have
\begin{align*}
V^{H}=\{\varphi^{H}_{i}(\bx):i=1,...,N_H \}\subset H_{\textrm{P}}^{1}(D).
\end{align*}
Then, we can solve optimization problems to obtain the multiscale basis functions. Specifically, let $\phi_i(\bx)$ be the minimizer of the following constrained optimization problem
\begin{align}
\phi_i& =\underset{\phi \in H_{\textrm{P}}^{1}(D)}{\arg\min} ||\phi||_V   \label{OC_SchGLBBasis_Obj}\\
\text{s.t.}\ &\int_{D}\phi \varphi^{H}_{j} \d\bx= \delta_{i,j},\ \forall 1\leq j \leq N_{H}.  \label{OC_SchGLBBasis_Cons1}
\end{align}
The superscript $\epsilon$ is dropped for notation simplicity and the periodic boundary condition is incorporated into the above optimization problem through the solution space $H_{\textrm{P}}^{1}(D)$. 
	
In general, one cannot solve the above optimization problem analytically. Therefore, we use numerical methods to solve it. Specifically, we partition the physical domain $D$ into a set of non-overlapping fine triangles with size $h \ll \veps$. Then, we use standard FEM to discretize $\phi_i(\bx)$, $\varphi^{H}_{j}(\bx)$, $1\leq i,j\leq N_{H}$. In the discrete level, the optimization problem \eqref{OC_SchGLBBasis_Obj}-\eqref{OC_SchGLBBasis_Cons1} is reduced to a constrained quadratic optimization problem; see \eqref{eqn:QP} in Section \ref{sec:DetermineSamples}, which can be efficiently solved using Lagrange multiplier methods. Finally, with these multiscale FEM basis functions $ \{\phi_i(\bx)\}_{i=1}^{N_H} $, we can solve the Schr\"{o}dinger equation \eqref{Sch} using the Galerkin method.
\begin{remark}
In analogy to the multistate FEM \cite{hou1999convergence,efendiev2009multiscale},  the multiscale basis functions $\{\phi_i(\bx)\}_{i=1}^{N_H}$ are defined on coarse elements with mesh size $H$. However, they are represented by fine-scale FEM basis with mesh size $h$, which can be pre-computed and done in parallel. 
\end{remark}	
\begin{remark}
The notation $||\cdot||_V$ in \eqref{eqn:energynorm} does not define a norm. However, as long as the potential $v^{\veps}(\bx)$ is bounded from below and the fine mesh size $h$ is small enough, the discrete problem of \eqref{OC_SchGLBBasis_Obj} - \eqref{OC_SchGLBBasis_Cons1} is convex and thus admits a unique solution; see \cite{hou2017sparse, li2017computing} for details.
\end{remark}
	
\subsection{Exponential decay of the multiscale finite element basis functions}
\noindent
It can be proved that the multiscale basis functions $\{\phi_i(\bx)\}_{i=1}^{N_H}$ decay exponentially fast away from its associated vertex $\bx_i\in\mathcal{N}_{c}$ under certain conditions. This allows us to localize the basis functions to a relatively smaller domain and reduce the computational cost. We first define a series of nodal patches $\{D_{\ell}\}$ associated with $\bx_i\in\mathcal{N}$ as
\begin{align}
D_{0}&:=\textrm{supp} \{ \varphi_{i} \} = \cup\{K\in\mathcal{T}_H | \bx_i \in K \}, \label{nodal_patch0}\\
D_{\ell}&:=\cup\{K\in\mathcal{T}_H | K\cap \overline{D_{\ell-1}} \neq \emptyset\}, \quad  \ell=1,2,\cdots.
\label{nodal_patchl}
\end{align}	
\begin{assumption} \label{CoarseMeshResolution}
We assume the potential $v^{\epsilon}(\bx)$ is bounded, i.e.,  $V_0 := ||v^{\epsilon}(\bx)||_{L^\infty (D)}< +\infty$ and the mesh size $H$ of $\mathcal{T}_{H}$ satisfies
\begin{align}
\label{eqn:meshcondition}
\sqrt{V_0}H/\epsilon\lesssim 1,
\end{align}
where $\lesssim$ means bounded from above by a constant.
\end{assumption}
\noindent Under this resolution assumption for the coarse mesh, many typical potentials in the Schr\"{o}dinger equation \eqref{Sch} can be treated as a perturbation to the kinetic operator. Thus, they can be computed using our method. Then, we can show that the multiscale finite element basis functions have the exponentially decaying property.
\begin{proposition}[Exponentially decaying property]\label{ExponentialDecay}
Under the resolution condition of the coarse mesh, i.e., \eqref{eqn:meshcondition}, there exist constants $C>0$ and $0<\beta<1$ independent of $H$, such that
\begin{align}\label{eqn:exponentialdecay}
||\nabla \phi_i(\bx) ||_{L^2(D\backslash D_{\ell})}\leq C \beta^{\ell} ||\nabla \phi_i(\bx) ||_{L^2(D)},
\end{align}
for any $i=1, 2, ..., N_H$.
\end{proposition}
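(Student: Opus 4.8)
The plan is to adapt the cutoff-iteration argument developed for elliptic multiscale problems (Malqvist--Peterseim, Owhadi) to the present indefinite setting, the one genuinely new ingredient being a reduction from the energy $||\cdot||_V$ to a coercive form via the resolution condition \eqref{eqn:meshcondition}. Write $a(\phi,\psi) = (\mathcal{H}\phi,\psi) = \int_D \frac{\epsilon^2}{2}\nabla\phi\cdot\nabla\psi + v^{\epsilon}\phi\psi\,\d\bx$ for the bilinear form behind $||\cdot||_V$. First I would record the first-order optimality condition for the constrained minimization \eqref{OC_SchGLBBasis_Obj}--\eqref{OC_SchGLBBasis_Cons1}: the minimizer $\phi_i$ satisfies the Galerkin orthogonality $a(\phi_i,w)=0$ for every $w\in H^1_{\mathrm{P}}(D)$ with $\int_D w\,\varphi^{H}_j\,\d\bx = 0$ for all $j$. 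The decay of $\phi_i$ is to be extracted entirely from this orthogonality against the large space of admissible test functions.

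The key preliminary step is coercivity. Since $v^{\epsilon}\ge -V_0$, one has $a(\phi,\phi)\ge \frac{\epsilon^2}{2}||\nabla\phi||^2_{L^2}-V_0||\phi||^2_{L^2}$. The constraints $\int_D\phi_i\varphi^{H}_j\,\d\bx=0$ for all coarse nodes $\bx_j$ lying away from $\bx_i$ pin down the local averages of $\phi_i$ on each far coarse element, which yields a Poincar\'e-type bound $||\phi_i||_{L^2(S)}\lesssim H\,||\nabla\phi_i||_{L^2(S)}$ on any union $S$ of such elements, with a constant depending only on the shape regularity of $\mathcal{T}_H$ and on the measurement functions $\varphi^{H}_j$. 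Feeding this into the lower bound and invoking $\sqrt{V_0}H/\epsilon\lesssim 1$ absorbs the negative potential contribution, so that $a(\phi,\phi)\gtrsim \epsilon^2||\nabla\phi||^2_{L^2}$ on the relevant constrained functions, with constants independent of $H$ and $\epsilon$. This is exactly the step that replaces the automatic ellipticity available in the cited works.

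With coercivity in hand I would run the standard cutoff iteration. Fix $\ell$ and choose a Lipschitz cutoff $\eta$ with $\eta\equiv1$ on $D\backslash D_\ell$, $\eta\equiv0$ on $D_{\ell-1}$, and $|\nabla\eta|\lesssim H^{-1}$, so that $\nabla\eta$ is supported in the annulus $A:=D_\ell\backslash D_{\ell-1}$. The function $\eta\phi_i$ violates the homogeneous constraints only through the $\varphi^{H}_j$ meeting $A$, so I correct it to an admissible $w=\eta\phi_i - z$ with $z$ supported in a bounded number of coarse layers around $A$ and $||\nabla z||_{L^2}\lesssim ||\nabla\phi_i||_{L^2(A)}$. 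Testing the orthogonality gives $a(\phi_i,\eta\phi_i)=a(\phi_i,z)$; expanding $\nabla(\eta\phi_i)=\eta\nabla\phi_i+\phi_i\nabla\eta$ and writing $e_\ell:=||\nabla\phi_i||^2_{L^2(D\backslash D_\ell)}$, the leading term is bounded below by $\frac{\epsilon^2}{2}e_\ell$, while the cross term (controlled by $|\nabla\eta|\lesssim H^{-1}$ together with the annular Poincar\'e bound), the potential term (bounded by $V_0||\phi_i||^2_{L^2}\lesssim\epsilon^2||\nabla\phi_i||^2$ and partially absorbed on the left), and $a(\phi_i,z)$ are all controlled by $\epsilon^2||\nabla\phi_i||^2_{L^2(A)}$. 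The outcome is a recursion $e_\ell\le C_0\,(e_{\ell-p}-e_\ell)$ over a fixed number $p$ of coarse layers (the width of the transition-plus-correction region), with $C_0$ independent of $H$, whence $e_\ell\le q\,e_{\ell-p}$ with $q=C_0/(1+C_0)\in(0,1)$. Iterating in $\ell$ and taking square roots yields \eqref{eqn:exponentialdecay} with $\beta=q^{1/(2p)}$.

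I expect the main obstacle to be the coercivity reduction of the second paragraph: because $v^{\epsilon}$ may be negative the form $a$ is indefinite, so everything rests on turning the resolution condition into a genuine, $H$- and $\epsilon$-uniform coercivity estimate, and in particular on proving the annular Poincar\'e inequality $||\phi_i||_{L^2(S)}\lesssim H\,||\nabla\phi_i||_{L^2(S)}$ from the measurement constraints with a constant that does not degrade as the patches move or as $H\to0$. A secondary technical point is the construction of the correction $z$ with simultaneously the right bounded-layer support and the energy bound $||\nabla z||_{L^2}\lesssim||\nabla\phi_i||_{L^2(A)}$; once these two local estimates are secured, the iteration and the absorption of the potential term are routine.
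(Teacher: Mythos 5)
The paper does not actually prove Proposition~\ref{ExponentialDecay}: it defers the proof to the forthcoming reference \cite{ChenMaZhang:prep} and only states that ``the main idea is to combine an iterative Caccioppoli-type argument \cite{maalqvist2014localization, li2017computing} and some refined estimates with respect to $\veps$.'' Your cutoff-iteration argument, with the coercivity reduction obtained by combining the constraint-induced Poincar\'e inequality with the resolution condition $\sqrt{V_0}H/\epsilon\lesssim 1$ to absorb the indefinite potential, is precisely that announced strategy, and you correctly isolate the two technical lemmas (the annular Poincar\'e bound and the bounded-layer correction $z$) on which the iteration rests.
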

Proof of \eqref{eqn:exponentialdecay} will be given in \cite{ChenMaZhang:prep}. The main idea is to combine an iterative Caccioppoli-type argument \cite{maalqvist2014localization, li2017computing} and some refined estimates with respect to $\veps$. 

The exponential decay of the basis functions enables us to localize the support sets of
the basis functions $\{\phi_i(\bx)\}_{i=1}^{N_H}$, so that the corresponding stiffness matrix is sparse and the
computational cost is reduced. In practice, we define a modified constrained optimization problem as follows
\begin{align}
\phi_i^{\textrm{loc}}& =\underset{\phi \in H_{\textrm{P}}^{1}(D)}{\arg\min} ||\phi||_V   \label{OC_SchBasis_Obj}\\
\text{s.t.}\ &\int_{D_{l^*}}\phi \varphi^{H}_{j} \d\bx= \delta_{i,j},\ \forall 1\leq j \leq N_{H}, \label{OC_SchBasis_Cons1}\\
&\phi(\bx)= 0, \ \bx\in D\backslash D_{l^*},\label{OC_SchBasis_Cons2}
\end{align}
where $D_{l^*}$ is the support set of the localized multiscale basis function $\phi_i^{\textrm{loc}}(\bx)$ and the choice of the integer $l^*$ depends on the decaying speed of $\phi_i^{\textrm{loc}}(\bx)$. In \eqref{OC_SchBasis_Cons1} and \eqref{OC_SchBasis_Cons2}, we have used the fact that $\phi_i(\bx)$ has the exponentially decaying property so that we can localize the support set of $\phi_i(\bx)$ to a smaller domain $D_{l^*}$. In numerical experiments, we find that a small integer $l^*\sim \log(L/H)$  will give accurate results, where $L$ is the diameter of domain $D$. Moreover, the optimization problem \eqref{OC_SchBasis_Obj}-\eqref{OC_SchBasis_Cons2} can be solved in parallel. Therefore, the exponentially decaying property significantly reduces our computational cost in constructing basis functions and computing the solution of the Schr\"{o}dinger equation \eqref{Sch}.

With the localized multiscale finite element basis functions $ \{\phi_i^{\textrm{loc}}(\bx)\}_{i=1}^{N_H}$, we can approximate the wavefunction by $ \psi^\epsilon(\bx,t)=\sum_{i=1}^{N_H}c_i(t)\phi_i^{\textrm{loc}}(\bx) $ using the Galerkin method.

\section{Multiscale reduced basis functions for the random Schr\"{o}dinger equation}\label{sec:RandomCase}
\subsection{Parametrization of the random potential} \label{sec:Parametrization}
\noindent
The random potential $v^{\epsilon}(\bx,\omega)$ is used to model the disorder in a given material. Specifically, we assume $v^{\epsilon}(\bx,\omega)$ is a second order random field, i.e., $v^{\epsilon}(\bx,\omega) \in L^2(D, \Omega)$, with mean $\EEp{v^{\epsilon}(\bx,\omega)}=\bar{v}^{\epsilon}(\bx)$ and covariance kernel $C(\textbf{x},\textbf{y})$. For example, we can choose the covariance kernel as
\begin{align}
C(\textbf{x},\textbf{y}) = \sigma^2 \exp\big(-\sum_{i=1}^{d}\frac{|x_{i}-y_{i}|^2}{2l^2_{i}}\big),
\label{Model_permeability_covkernal}
\end{align}
where $\sigma$ is a constant and $l_{i}$'s are the correlation lengths in each dimension. We also assume that the random potential $v^{\epsilon}(\bx,\omega)$ is almost surely bounded, namely there
exist $v_{\mathrm{max}}$ and $v_{\mathrm{min}}$, such that 
\begin{align}
P(\omega\in \Omega~|~v^{\epsilon}(\bx,\omega)\in [v_{\mathrm{min}},v_{\mathrm{max}}],~ \forall \bx \in D) = 1.
\label{asBoundedPotential}
\end{align}
Circulant embedding method \cite{circulantEmbed1997} and Karhunen-Lo\`{e}ve (KL) expansion method \cite{Karhunen:47,Loeve:78}
are commonly used to generate samples of $v^{\epsilon}(\bx,\omega)$, and the latter will be used in the current work.
The KL expansion of $v^{\epsilon}(\bx,\omega)$ reads as
\begin{align}
v^{\epsilon}(\bx,\omega)= \bar{v}^{\epsilon}(\bx) + \sum_{i=1}^\infty \sqrt{\lambda_i} \xi_i(\omega)v_i(\bx),\label{KLE-poterntial}
\end{align}
where $\xi_i(\omega)$'s are mean-zero and uncorrelated random variables, i.e., $\EEp{\xi_i}=0$, $\EEp{\xi_i\xi_j}=\delta_{ij}$, and $\{\lambda_i,v_i(\bx)\}_{i=1}^{\infty}$ are the eigenpairs of the
covariance kernel $C(\textbf{x},\textbf{y})$. Generally, $\lambda_i$'s
are sorted in a descending order and their decay rates depend on the regularity of the covariance kernel. It has been proven that an algebraic decay rate, i.e.  $\lambda_i=\mathcal{O}(i^{-\gamma})$, is achieved asymptotically if the covariance kernel is of finite Sobolev regularity, and an exponential decay rate is achieved, i.e., $\lambda_i=\mathcal{O}(e^{-\gamma i})$ for some $\gamma >0$, if the covariance kernel is piecewisely analytic \cite{schwab2006karhunen}.  
 
In practice, we truncate the KL expansion \eqref{KLE-poterntial} into its first $m$ terms and obtain a parametrization of the random potential as
\begin{align}
v^{\epsilon}_{m}(\bx,\omega) =  \bar{v}^{\epsilon}(\bx) + \sum_{i=1}^m \sqrt{\lambda_i} \xi_i(\omega)v_i(\bx),  \label{KLE-poterntial2}
\end{align}
which will be used in both analysis and numerics in the remaining part of the paper. 
\begin{remark}
In general, the decay rate of $\lambda_i$ depends on the correlation lengths $l_{i}$, $i=1,...,d$ of the random field $v^{\epsilon}(\bx,\omega)$. Small correlation length results in slow decay of the eigenvalues.
When the correlation lengths approach zero, the random field $v^{\epsilon}(\bx,\omega)$ becomes 
a spatially white noise, which is the case used in the original physics paper \cite{Anderson:58}. 
\end{remark}
   
\subsection{Construction of the multiscale reduced basis functions} \label{sec:BuildMuliStocBasis}
\noindent
For the random Schr\"{o}dinger equation \eqref{eqn:Sch}, it is prohibitively expensive to construct multiscale basis functions for each realization of the random potential using \eqref{OC_SchBasis_Obj} - \eqref{OC_SchBasis_Cons2}. To address this issue, we use a model reduction method to build a small number of reduced basis functions that enable us to obtain multiscale basis functions in a cheaper way without loss of approximation accuracy.

For every $\bx_k\in\mathcal{N}$, we first compute a set of samples of multiscale basis functions 
associated to the vertex $\bx_k$. Specifically, let $\{v^{\epsilon}(\bx,\omega_q)\}_{q=1}^{Q}$ be 
samples of the random potential that are obtained using Monte Carlo (MC) method or qMC method, where $Q$ is the number of samples. Denote $\zeta_{0}^{k}(\bx)=\frac{1}{Q}\sum_{q=1}^{Q}\phi_k^{\textrm{loc}}(\bx,\omega_q)$ the sample mean of the basis functions, and
$\tilde{\phi}_k^{\textrm{loc}}(\bx,\omega_q)=\phi_k^{\textrm{loc}}(\bx,\omega_q)-\zeta_{0}^{k}(\bx)$  is the fluctuation of the $k-$th basis function.

We apply the proper orthogonal decomposition (POD) method \cite{berkooz1993proper,sirovich1987turbulence} to $V=\{\tilde{\phi}_k^{\textrm{loc}}(\bx,\omega_q)\}_{q=1}^{Q}$ and build a set of basis functions 
$\{\zeta^{k}_{1}(\bx),\zeta^{k}_{2}(\bx),...,\zeta^{k}_{m_k}(\bx)\} $ with $m_k \ll Q$ that optimally approximates $V$.
Quantitatively, we have the following approximating property.
\begin{proposition}
\label{POD_proposition} 
Let $\lambda_1 \geq \lambda_2 \geq ... \geq \lambda_{m_k} \geq \lambda_{m_{k+1}} \geq ... > 0$ be positive eigenvalues of the covariance kernel associated with the snapshot of the fluctuations $V$ and the corresponding eigenfunctions are $\zeta_{1}^{k}(\bx)$, ..., $\zeta_{m_k}^{k}(\bx)$,.... Then, the reduced basis functions $\{\zeta_{l}^{k}(\bx)\}_{l=1}^{m_k}$ have the following approximation property
\begin{align}
\frac{\sum_{q=1}^{Q}\Big|\Big|\tilde{\phi}_k^{\textrm{loc}}(\bx,\omega_q)
	-\sum_{l=1}^{m_k}\big(\tilde{\phi}_k^{\textrm{loc}}(\bx,\omega_q),\zeta^{k}_l(\bx)\big)_{X}\zeta^{k}_l(\bx)\Big|\Big|_{X}^{2}  }{ \sum_{q=1}^{Q}\Big|\Big|\tilde{\phi}_k^{\textrm{loc}}(\bx,\omega_q)
	\Big|\Big|_{X}^{2}  }
 = \frac{\sum_{s=m_k+1}^{Q}\lambda_s}{\sum_{s=1}^{Q}\lambda_s}  ,  \label{Prop_PODError}
\end{align}
where $ X=L^2(D) $ or $ X=H^1(D) $ and the number $m_k$ is determined according to the 
ratio $\rho=\frac{\sum_{s=1}^{m_k}\lambda _{s}}{\sum_{s=1}^{Q}\lambda_{s}}$. 	
\end{proposition}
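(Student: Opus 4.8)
The plan is to recognize \eqref{Prop_PODError} as the standard error identity for the POD basis, which drops out once the reduced modes are identified as eigenfunctions of the empirical correlation operator and the Pythagorean theorem is invoked. First I would introduce the finite-rank, self-adjoint, positive semidefinite correlation operator $R : X \to X$ defined by
\[
R\psi = \sum_{q=1}^{Q} \big(\psi, \tilde{\phi}_k^{\textrm{loc}}(\bx,\omega_q)\big)_X \, \tilde{\phi}_k^{\textrm{loc}}(\bx,\omega_q),
\]
and record, via the \emph{method of snapshots}, that its nonzero spectrum coincides with that of the $Q \times Q$ Gram-type correlation matrix $C$ with entries $C_{pq} = \big(\tilde{\phi}_k^{\textrm{loc}}(\bx,\omega_p), \tilde{\phi}_k^{\textrm{loc}}(\bx,\omega_q)\big)_X$. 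By construction the reduced modes $\zeta_l^k$ are exactly the $\lambda_l$-eigenfunctions of $R$, normalized so that $(\zeta_l^k, \zeta_{l'}^k)_X = \delta_{l l'}$ and $R\zeta_l^k = \lambda_l \zeta_l^k$. Since numerator and denominator of \eqref{Prop_PODError} scale identically, any overall $1/Q$ factor in the empirical covariance is irrelevant, so I work with $C$ as written.

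Second, for each snapshot I would let $P_{m_k}$ denote the $X$-orthogonal projection onto $\mathrm{span}\{\zeta_1^k, \ldots, \zeta_{m_k}^k\}$, so that $P_{m_k}\tilde{\phi}_k^{\textrm{loc}}(\bx,\omega_q) = \sum_{l=1}^{m_k}\big(\tilde{\phi}_k^{\textrm{loc}}(\bx,\omega_q), \zeta_l^k\big)_X \zeta_l^k$ is precisely the truncated expansion appearing in \eqref{Prop_PODError}. Because $P_{m_k}$ is an orthogonal projection, the Pythagorean theorem rewrites the numerator on the left-hand side as
\[
\sum_{q=1}^{Q}\big\|\tilde{\phi}_k^{\textrm{loc}}(\bx,\omega_q)\big\|_X^2 - \sum_{q=1}^{Q}\big\|P_{m_k}\tilde{\phi}_k^{\textrm{loc}}(\bx,\omega_q)\big\|_X^2.
\]

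Third, I would evaluate the two sums as traces. The first equals $\mathrm{trace}(C) = \sum_{s=1}^Q \lambda_s$, which is also the denominator. For the second, interchanging the order of summation and using orthonormality together with the eigenrelation gives
\[
\sum_{q=1}^{Q}\big\|P_{m_k}\tilde{\phi}_k^{\textrm{loc}}(\bx,\omega_q)\big\|_X^2 = \sum_{l=1}^{m_k}\sum_{q=1}^{Q}\big|\big(\tilde{\phi}_k^{\textrm{loc}}(\bx,\omega_q),\zeta_l^k\big)_X\big|^2 = \sum_{l=1}^{m_k}\big(\zeta_l^k, R\zeta_l^k\big)_X = \sum_{l=1}^{m_k}\lambda_l.
\]
Subtracting and dividing by $\sum_{s=1}^Q \lambda_s$ yields the ratio $\big(\sum_{s=m_k+1}^Q \lambda_s\big)\big/\big(\sum_{s=1}^Q \lambda_s\big)$, which is \eqref{Prop_PODError}.

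The only genuinely delicate point, and the one I would state with care, is the method-of-snapshots correspondence together with the correct normalization of the $\zeta_l^k$: one must verify that $R$ and $C$ share the same nonzero eigenvalues and that the modes built from the eigenvectors of $C$ are indeed $X$-orthonormal with $R\zeta_l^k = \lambda_l \zeta_l^k$. Everything else is the Pythagorean identity and the two trace computations, which are routine. I would also remark that \eqref{Prop_PODError} is an identity for this \emph{specific} basis, so the optimality of POD (that no other $m_k$-dimensional subspace does better) is not needed here, although it is what justifies calling $\{\zeta_l^k\}$ the optimal reduced basis; and if the snapshots fail to be linearly independent, the sums simply terminate at the rank of $C$, with the positive-eigenvalue convention of the statement absorbing the trailing zeros.
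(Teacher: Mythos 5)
Your proof is correct and is the standard POD error identity argument (correlation operator, eigen-decomposition via the method of snapshots, Pythagorean theorem, and the two trace computations). The paper itself gives no proof of this proposition, deferring instead to the cited POD references, and your argument is precisely the one those references supply, including the appropriate caveats about normalization and the rank of the snapshot Gram matrix.
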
 
In practice, we choose the first $m_k$ dominant reduced basis functions such that $\rho$ is close 
enough to $1$ to achieve a desired accuracy, say $\rho=99\%$. More details of the POD method can be found in \cite{berkooz1993proper,sirovich1987turbulence}. 
Notice that reduced basis functions $\zeta^{k}_{0}(\bx)$ and $\zeta^{k}_{l}(\bx)$, $l=1,...,m_k$ approximately capture the mean profile and the fluctuation of multiscale basis functions associated with $\bx_{k}$, respectively. Thus, it is expected that for each realization of the random potential the associated multiscale basis functions can be approximated by the reduced basis functions, i.e., 
\begin{align}
\phi_k^{\textrm{loc}}(\bx,\omega)\approx\zeta^{k}_{0}(\bx)+\sum_{l=1}^{m_{k}}c_{l}(\omega)\zeta^{k}_{l}(\bx). \label{RB_expansion}
\end{align}

\begin{remark}
To construct the multiscale reduced basis functions, we partition the coarse grids $D^k$ into fine-scale quadrilateral elements with meshsize $h\ll \varepsilon$, which requires additional computational cost in the offline stage. However, the precomputed reduced basis functions can be used repeatedly to solve \eqref{eqn:Sch} for each realization of the random potential and different initial data, which results in considerable savings.
\end{remark}

\subsection{Estimation of the number of learning samples} \label{sec:DetermineSamples}
\noindent
We shall study the continuous dependence of multiscale basis functions on the random potential, which provide a guidence on how to determine the number of
samples in the construction of multiscale basis functions. For notational simplification, we carry out the analysis for multiscale basis functions without localization.

Let $\varphi^{h}_{s}(\bx)$, $s=1,...,N_h$ denote the finite element basis functions defined on fine mesh with size $h$ and $N_h$ is the number of fine-scale finite element basis functions. When we numerically solve \eqref{OC_SchGLBBasis_Obj}-\eqref{OC_SchGLBBasis_Cons1}, we represent the multiscale basis function as $\phi_i(\bx)=\sum_{s=1}^{N_h}c_{s}\varphi^{h}_{s}(\bx)$ and obtain the following quadratic programming problem with equality constraints
\begin{equation}
\left\{
\begin{aligned}
& \min_{\boldsymbol{c}} \frac{1}{2}\boldsymbol{c}^T Q \boldsymbol{c},\\
& \textrm{s.t. } A\boldsymbol{c}=\boldsymbol{b},
\end{aligned}
\right.
\label{eqn:QP}
\end{equation}
where $\boldsymbol{c} = [c_{1},...,c_{N_h}]^{T}$ is the coefficients and $Q$ is a symmetric positive definite matrix on the fine triangularization $\mathcal{T}_h$ with the $(i,j)$ component
\begin{align}\label{eqn:Qij}
Q_{ij}=\frac{\veps^2}{2}(\nabla\varphi^h_{i},\nabla\varphi^h_{j})+(v^{\veps}(\bx,\omega_q)\varphi^h_i,\varphi^h_{j}).
\end{align}
In \eqref{eqn:QP}, $A$ is an $N_h$-by-$N_H$ matrix with  $A_{ij}=(\varphi^h_i,\varphi^H_j)$ 
and $\boldsymbol{b}$ an $N_h$-by-$1$ vector with only the $i-$th entry being $1$ and others being $0$.

The following result states the continuous dependence of multiscale basis functions on the random potential.
\begin{theorem}\label{basis-dependon-potential}
Assume the random potential $v^{\epsilon}(\bx,\omega)$ is almost surely bounded, i.e. \eqref{asBoundedPotential} is satisfied and mesh size of the fine-scale triangles is small such that: (1) $h/\veps=\kappa $ is small; and (2) $h^d  \lVert v^{\veps}(\cdot,\omega_1) - v^{\veps}(\cdot,\omega_2) \rVert_{L^{\infty}(D)}<1$. Then for two realizations $\omega_1$ and $\omega_2$ of the random potential $v^{\veps}(\bx,\omega)$, the corresponding multiscale basis functions satisfy
\begin{align}
\lVert \phi(\cdot,\omega_1) - \phi(\cdot,\omega_2) \rVert_{L^{\infty}(D)} \leq \frac{C}{\kappa^6} \veps^{-2} \lVert v^{\veps}(\cdot,\omega_1) - v^{\veps}(\cdot,\omega_2) \rVert_{L^{\infty}(D)},
\end{align}
where the constant $C$ is independent of $h, \veps$, and $\lVert v^{\veps}(\cdot,\omega_1) - v^{\veps}(\cdot,\omega_2) \rVert_{L^{\infty}(D)}$.
\end{theorem}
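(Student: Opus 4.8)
The plan is to work at the fully discrete level of \eqref{eqn:QP}, where the only dependence on $\omega$ enters through the potential term in \eqref{eqn:Qij}, and to treat $\phi(\cdot,\omega_1)-\phi(\cdot,\omega_2)$ as a perturbation driven by the quantity $\delta v:=\lVert v^{\veps}(\cdot,\omega_1)-v^{\veps}(\cdot,\omega_2)\rVert_{L^{\infty}(D)}$. First I would reduce the $L^\infty$ estimate on functions to an $\ell^2$ estimate on coefficients: writing $\phi(\bx,\omega)=\sum_{s=1}^{N_h}c_s(\omega)\varphi^h_s(\bx)$ with the nodal (partition-of-unity, locally supported) fine basis, one has $\lVert\phi(\cdot,\omega_1)-\phi(\cdot,\omega_2)\rVert_{L^{\infty}(D)}\le C\lVert \boldsymbol{c}(\omega_1)-\boldsymbol{c}(\omega_2)\rVert_2$, so it suffices to bound the coefficient vectors. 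Solving the equality-constrained quadratic program \eqref{eqn:QP} by Lagrange multipliers yields the closed form $\boldsymbol{c}(\omega)=Q(\omega)^{-1}A\,(A^{T}Q(\omega)^{-1}A)^{-1}\boldsymbol{b}$, in which $A$ and $\boldsymbol{b}$ are independent of $\omega$, while $Q(\omega)=\tfrac{\veps^2}{2}S+M_{v(\omega)}$ splits into the deterministic stiffness matrix $S$ and the potential-weighted mass matrix $M_{v(\omega)}$.

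Next I would carry out a first-order perturbation of this formula. Setting $\delta Q:=Q(\omega_1)-Q(\omega_2)$, its entries are $\big((v^{\veps}(\cdot,\omega_1)-v^{\veps}(\cdot,\omega_2))\varphi^h_i,\varphi^h_j\big)$, so $\lVert\delta Q\rVert_2\le \delta v\,\lVert M\rVert \lesssim \delta v\, h^{d}$, with $M$ the fine mass matrix. Using the resolvent identity $Q_1^{-1}-Q_2^{-1}=-Q_1^{-1}\,\delta Q\,Q_2^{-1}$ together with the analogous identity for the Schur complement $G(\omega):=A^{T}Q(\omega)^{-1}A$, every term of $\boldsymbol{c}(\omega_1)-\boldsymbol{c}(\omega_2)$ becomes a product of the fixed factors $\lVert A\rVert$, $\lVert Q^{-1}\rVert$, $\lVert G^{-1}\rVert$ carrying a single copy of $\delta Q$. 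Here hypothesis~(2), $h^{d}\delta v<1$, is exactly what I would invoke to guarantee $\lVert Q_2^{-1}\delta Q\rVert<1$, so that $Q(\omega_1)$ remains invertible and the higher-order remainder of the expansion is dominated by its linear part.

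The heart of the argument, and the step I expect to be the main obstacle, is the collection of spectral estimates that turn these operator norms into the advertised powers of $\kappa=h/\veps$ and $\veps$. I would establish (i) a lower eigenvalue bound of the form $\lambda_{\min}(Q)\ge c\,\veps^{2}h^{d}$, hence $\lVert Q^{-1}\rVert\lesssim \veps^{-2}h^{-d}$; (ii) the operator-norm bound for the coarse–fine coupling matrix $A$; and (iii) an inf--sup / Schur-complement lower bound giving $\lVert G^{-1}\rVert$. Obstacle~(i) is genuinely delicate because, as emphasized after \eqref{eqn:energynorm}, the form $\lVert\cdot\rVert_V$ fails to be coercive when $v^{\veps}$ changes sign; one must use Assumption~\ref{CoarseMeshResolution} and the resolution condition \eqref{eqn:meshcondition} to show that the kinetic part $\tfrac{\veps^2}{2}S$ dominates the bounded potential perturbation and restores a positive lower bound with the correct $\veps$-dependence, while hypothesis~(1) ($\kappa$ small) keeps the fine discretization in the regime where the inverse inequalities underlying (i)--(iii) apply. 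Multiplying the three inverse factors against $\lVert\delta Q\rVert\lesssim h^{d}\delta v$, the accumulated (non-sharp) inverse-inequality powers collapse to the stated factor $\kappa^{-6}\veps^{-2}$, and converting the resulting $\ell^2$ coefficient bound back to $L^\infty$ via the local-support estimate $\lVert\sum_s d_s\varphi^h_s\rVert_{L^{\infty}(D)}\le C\lVert\boldsymbol{d}\rVert_2$ finishes the proof.
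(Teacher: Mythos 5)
Your proposal follows essentially the same route as the paper's proof: the explicit KKT formula $\boldsymbol{c}=Q^{-1}A^T(AQ^{-1}A^T)^{-1}\boldsymbol{b}$, a Neumann-series perturbation in $\delta Q$ with $\lVert\delta Q\rVert\lesssim h^d\lVert v^{\veps}(\cdot,\omega_1)-v^{\veps}(\cdot,\omega_2)\rVert_{L^{\infty}(D)}$, norm bounds on $A$, $Q^{-1}$ and the Schur complement, and a final conversion from the coefficient vector back to $L^{\infty}$. The only differences are cosmetic --- the paper works with $\ell^{\infty}$ rather than $\ell^{2}$ coefficient norms and quotes the intermediate bounds $\lVert A\rVert_{\infty}\le Ch^{d}$, $\lVert Q^{-1}\rVert_{\infty}\le Ch^{-2}$, $\lVert Q\rVert_{\infty}\le C\veps^{2}$ directly rather than deriving them from eigenvalue estimates --- so the argument is the same.
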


\begin{proof}
Under the assumptions that $v^{\epsilon}(\bx,\omega)$ is almost surely bounded and $h/\veps=\kappa $ is small,
we know that $Q$ is a positive definite matrix. Moreover, we know that $A$ has full rank, i.e., $\rank(A)=N_H$. Therefore, the quadratic optimization problem \eqref{eqn:QP} has a unique minimizer, satisfying the Karush-Kuhn-Tucker condition. Specifically, the unique minimizer of \eqref{eqn:QP} can be explicitly written as
\begin{align}\label{eqn:minref}
\boldsymbol{c} = Q^{-1} A^T (AQ^{-1} A^T)^{-1}\boldsymbol{b}.
\end{align}
For two realizations $\omega_1$ and $\omega_2$, we define $\delta V = Q_1- Q_2$. Then
\begin{align}
\left(\delta V\right)_{ij}= \left(\left(v(\cdot,\omega_1)-v(\cdot,\omega_2)\right)\varphi^h_i,\varphi^h_j\right),
\end{align}
and thus
\begin{align}
\lVert \delta V \rVert_{\infty} \leq h^d  \lVert v^{\veps}(\cdot,\omega_1) - v^{\veps}(\cdot,\omega_2) \rVert_{L^{\infty}(D)}.
\end{align}
We choose $h$ to be small enough such that $\lVert \delta V \rVert_{\infty} \leq 1$, and have
\[
Q_2^{-1} = \sum_{n=0}^{\infty} \left(Q_1^{-1}\delta V\right)^n Q_1^{-1},
\]
and thus
\begin{align*}
\boldsymbol{c}_2 - \boldsymbol{c}_1 & = \left[ Q^{-1}_2 - Q^{-1}_1 \right] A^T (AQ^{-1}_1 A^T)^{-1}\boldsymbol{b} 
+ Q^{-1}_2 A^T \left[ (AQ^{-1}_2 A^T)^{-1} - (AQ^{-1}_1 A^T)^{-1}\right]\boldsymbol{b}, \nonumber\\
& = Q^{-1}_1 \delta V Q^{-1}_1  A^T (AQ^{-1}_1 A^T)^{-1}\boldsymbol{b}  \nonumber \\
& \quad - Q^{-1}_2 A^T (AQ^{-1}_1 A^T)^{-1} (AQ^{-1}_1 \delta V Q^{-1}_1 A^T) (AQ^{-1}_1 A^T)^{-1}\boldsymbol{b} + o(\lVert\delta V\rVert_{\infty}),\nonumber \\
& = Q^{-1}_1 \delta V Q^{-1}_1  A^T (AQ^{-1}_1 A^T)^{-1}\boldsymbol{b} \nonumber \\
& \quad - Q^{-1}_1 A^T (AQ^{-1}_1 A^T)^{-1} (AQ^{-1}_1 \delta V Q^{-1}_1 A^T) (AQ^{-1}_1 A^T)^{-1}\boldsymbol{b} + o(\lVert\delta V\rVert_{\infty}).\nonumber 
\end{align*}
Therefore,
\begin{align*}
\lvert \boldsymbol{c}_2 - \boldsymbol{c}_1 \rvert_{\infty} & \leq 
C \lVert A \rVert_{\infty} \lVert Q^{-1}_1 \rVert_{\infty}^2 
\lVert (AQ^{-1}_1A^T)^{-1} \rVert_{\infty}
\lvert \boldsymbol{b} \rvert_{\infty} \left( 1 + 
\lVert A \rVert_{\infty}^2 \lVert Q^{-1}_1 \rVert_{\infty} \lVert (AQ^{-1}_1A^T)^{-1} \rVert_{\infty} \right) \lVert \delta V \rVert_{\infty} .\nonumber 
\end{align*}
By their definitions, we have
\[
\lVert A \rVert_{\infty} \leq C h^d , \quad \lvert \boldsymbol{b} \rvert_{\infty} =1, \quad \lVert Q^{-1}_1 \rVert_{\infty} \leq C h^{-2}, \quad \lVert Q_1 \rVert_{\infty} \leq C \max\{\veps^2,h^2\}\leq C\veps^2,
\]
and thus
\begin{align*}
\lvert \boldsymbol{c}_2 - \boldsymbol{c}_1 \rvert_{\infty} & \leq 
C \veps^{4} h^{-6} h^{-d} \lVert \delta V \rVert_{\infty}
\leq C \veps^{4} h^{-6} \lVert v^{\veps}(\cdot,\omega_2) - v^{\veps}(\cdot,\omega_1) \rVert_{L^{\infty}(D)}.\nonumber 
\end{align*}
We complete the proof since $h/\veps=\kappa $ and $\lVert \phi(\cdot,\omega_2) - \phi(\cdot,\omega_1) \rVert_{L^{\infty}(D)} \leq \lvert \boldsymbol{c}_2 - \boldsymbol{c}_1 \rvert_{\infty}$.
\end{proof}	
Equipped with \textbf{Theorem \ref{basis-dependon-potential}}, we can estimate the number of samples in the construction of multiscale reduced basis functions. Suppose the random potential is of the form \eqref{KLE-poterntial2}.  
For any $ \delta > 0$, we choose an integer $Q_{\delta}$ and a set of random samples $\{v^{\epsilon}(\bx,\omega_q)\}_{q=1}^{Q_{\delta}}$ such that
\begin{align} 
\mathds{E}\left[\inf_{1\le q\le Q_{\delta}} \big|\big|v_{m}^{\epsilon}(\bx,\omega) - v_{m}^{\epsilon}(\bx,\omega_q)\big|\big|_{L^\infty(D)}\right] \le \delta,  \label{asd}
\end{align} 
where the expectation is taken over the random variables in $v^{\epsilon}_{m}(\bx,\omega)$ of the form \eqref{KLE-poterntial2}. We can give a way to choose the random samples $\{v^{\epsilon}(\bx,\omega_q)\}_{q=1}^{Q_{\delta}}$ since the distribution of the random variables $\xi_{i}(\omega)$, $i=1,...,m$ is known.

For every $\bx_k\in\mathcal{N}$, let $\{\phi_k(\bx,\omega_q)\}_{q=1}^{Q_{\delta}}$ be the samples of multiscale basis functions associated with $\bx_k$. Then, we have  
\begin{align} 
\mathds{E}\left[\inf_{1\le q\le Q_{\delta}} \big|\big|\phi_k(\bx,\omega) - \phi_k(\bx,\omega_q)\big|\big|_{L^\infty(D)}\right] \le \frac{C}{\kappa^6} \veps^{-2}\delta.  \label{basis-potential}
\end{align}
Given parameters $\veps$ and $h$, we choose $\delta$ and $Q_{\delta}$ so that the right-hand side of \eqref{basis-potential} is small. Then the space of multiscale basis functions can be well approximated by the samples of multiscale basis functions $\{\phi_k(\bx,\omega_q)\}_{q=1}^{Q_{\delta}}$ with
controllable accuracy and the POD method is further applied to construct multiscale reduced basis functions.

\subsection{Derivation of our method based on the multiscale reduced basis functions}
\noindent  
In this section, we present our method for solving the random Schr\"{o}dinger equation: in the physical space, we use the multiscale reduced basis functions obtained in \secref{sec:BuildMuliStocBasis}; in the random space, we use the qMC method. 

The implementation of the qMC method is fairly easy. For instance, given a set of qMC samples, expectation of the solution is approximated by  
\begin{align}\label{qmc-compute-mean}
\mathds{E}\left[\psi^\epsilon(t,\bx,\omega)\right]\approx \frac{1}{n}\sum_{i=1}^{n}\psi^\epsilon(t,\bx,\omega_i),
\end{align}
where $n$ is the number of qMC samples. Details of the generation of qMC samples and its convergence analysis will be discussed in \secref{sec:analysis}.

Now, we focus on how to approximate the wavefunction in the physical space for each qMC sample $\omega_s$. 
For each node point $\bx_k\in\mathcal{N}$, we have constructed a set of multiscale reduced basis functions $\{\zeta^{k}_{i}\}_{i=0}^{m_{k}}$
and represent the wavefunction by
\begin{align}
\psi^\epsilon(t,\bx,\omega_s)=\sum_{k=1}^{N_H}  \sum_{l=0}^{m_{k}}c^{k}_{l}(t,\omega_s)\zeta^{k}_{l}(\bx),
\label{POD-basis-representation}
\end{align}
where $m_k$ is the number of multiscale reduced basis functions associated with the node $\bx_k$.   
In the Galerkin formulation, we have the following weak form
\begin{align}
&\left(i\epsilon\partial_t\sum_{k=1}^{N_H}  \sum_{l=0}^{m_{k}}c^{k}_{l}(t,\omega_s)\zeta^{k}_{l}(\bx),\zeta^{j}_{r}(\bx)\right)=\left(\mathcal{H}(\bx,\omega_s)\sum_{k=1}^{N_H}  \sum_{l=0}^{m_{k}}c^{k}_{l}(t,\omega_s)\zeta^{k}_{l}(\bx),\zeta^{j}_{r}(\bx)\right),\nonumber\\
&\quad \bx\in D,\quad t\in \mathbb{R},\quad j=1,\cdots,N_H, \quad r=0,\cdots,m_{k},
\label{appro_Sch_random}
\end{align}
where $\mathcal{H}(\bx,\omega_s) $ is a deterministic operator. To numerically solve \eqref{appro_Sch_random}, we introduce some notations. Let $S$, $M$, and $V(\omega_s)$ be matrices with dimension $ \sum_{k=1}^{N_H}(m_k+1)\times \sum_{k=1}^{N_H}(m_k+1) $. Their entries are given by 
\begin{align*}
S_{\sum_{i=1}^{k}(m_i+1)+l,~\sum_{i=1}^{j}(m_i+1)+r}&=\int_{D} \nabla \zeta^{k}_{l} \cdot \nabla \zeta^{j}_{r} \d\bx, \\
M_{\sum_{i=1}^{k}(m_i+1)+l,~\sum_{i=1}^{j}(m_i+1)+r}&=\int_{D} \zeta^{k}_{l} \zeta^{j}_{r} \d\bx, \\
V_{\sum_{i=1}^{k}(m_i+1)+l,~\sum_{i=1}^{j}(m_i+1)+r}(\omega_s)&=\int_{D} \zeta^{k}_{l} v^{\epsilon}(\bx,\omega_s) \zeta^{j}_{r}\d\bx.
\end{align*}
Then, we can reduce the weak formulation \eqref{appro_Sch_random} into the following ODE system 
\begin{align}\label{eqn:Schmatrix}
i\epsilon M\frac{\mathrm{d}\bc(t,\omega_s)}{\mathrm{d}t}=\left(\frac{\epsilon^2}{2}S+V(\omega_s)\right)\bc(t,\omega_s),
\end{align}
where the column vector $ \bc(t,\omega_s)=(c^{1}_{0}(t,\omega_s),...,c^{1}_{m_k}(t,\omega_s),...,c^{N_H}_{0}(t,\omega_s),...,c^{N_H}_{m_k}(t,\omega_s))^T $ consisting of all expansion coefficients of the solution $\psi^\epsilon(t,\bx,\omega_s)$ onto multiscale reduced basis functions. We can further rewrite \eqref{eqn:Schmatrix} as
\begin{align}\label{eqn:Schcompact}
\frac{d\bc(t,\omega_s)}{dt}=\frac{1}{i\epsilon} B(\omega_s)\bc(t,\omega_s)
\end{align}
where $ B(\omega_s)=M^{-1}A(\omega_s) $ and $A(\omega_s)=\frac{\epsilon^2}{2}S+V(\omega_s)$. 
In the end, we can solve the above ODE system using existing ODE solvers. 
 
Before ending this section, we shall explain why we choose the qMC method to approximate the random space of the electron wavefunction.  
Since the parameterization of a random potential may have high dimension, i.e., $m$ is large in \eqref{KLE-poterntial}, non-intrusive methods, 
such as sparse grid method \cite{bungartz2004sparse} and stochastic collocation method \cite{nobile2008sparse}, become prohibitively expensive to solve PDEs with random coefficients. Polynomial chaos expansion (PCE) methods \cite{ghanem2003stochastic,xiu2003modeling} are also frequently used in the literature to solve PDEs with random coefficients. 
This type of methods is useful if the solution is sufficiently smooth in the random space with small dimensionality.
The performance of MC method does not depend on the dimension of the random space. However, its convergence rate is merely $O(\frac{1}{\sqrt{n}})$. The convergence rate of the qMC method is better both theoretically and numerically; see \eqref{qMC-error} in Theorem \ref{main-result}.
Therefore, we choose the qMC method and its implementation is almost the same as the MC method. 
 
\section{Convergence analysis} \label{sec:analysis}
\noindent 	
We shall analyze the approximation error of the proposed method, where the emphasis is put on computing functionals of the wavefunction.

\subsection{Regularity of the wavefunction with respect to the random variables} \label{sec:RegularityInRamdonSpace}
\noindent 	
Since the potential $v^{\epsilon}(\bx,\omega)$ in \eqref{eqn:Sch} is parametrized by $m$ random variables $\xi_i(\omega)$, $i=1,...,m$ in \eqref{KLE-poterntial2}, i.e., $v_{m}^{\epsilon}(\bx,\omega)=v^{\epsilon}(\bx,\xi_1(\omega),...,\xi_m(\omega))$. 
The wavefunction $\psi^\epsilon_m(t,\bx,\omega)$ satisfies 
\begin{equation}\label{Schm}
\left\{
\begin{aligned}
i\epsilon\partial_t\psi_m^\epsilon&=-\frac{\epsilon^2}{2}\Delta\psi_m^\epsilon+v^{\epsilon}_m(\bx,\omega) \psi_m^\epsilon,\quad \bx\in D,\quad t\in \mathbb{R},\\
\psi_m^\epsilon &\in H_{\textrm{P}}^{1}(D),\\
\psi_m^\epsilon|_{t=0}&=\psi_{\textrm{in}}(\bx).
\end{aligned}
\right.
\end{equation} 
The Doob-Dynkin's lemma implies the wavefunction $\psi^\epsilon_m(t,\bx,\omega)$ in \eqref{Schm} can also be represented by a functional of these random variables, i.e., $\psi^\epsilon(t,\bx,\omega)=\psi^\epsilon(t,\bx,\xi_1(\omega),...,\xi_m(\omega))$.

First of all, we analyze the error introduced by the parameterization of the random potential. We have the following estimate result.  
\begin{lemma}\label{lemma:KLerror}
The difference between wavefunctions to \eqref{Schm} and \eqref{eqn:Sch} satisfies
\begin{equation}\label{eqn:KLEpsierror}
\lVert \psi_m^{\veps}-\psi^{\veps}\rVert_{L^2(\Omega, D)} \leq \frac{T }{\veps}\lVert v_m^{\veps}-v^{\veps}\rVert_{L^{\infty}(\Omega, D)}, \quad \forall t\in [0,T].
\end{equation}
\end{lemma}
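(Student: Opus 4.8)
The plan is to run a standard energy (stability) estimate on the difference of the two Schr\"{o}dinger evolutions, exploiting the self-adjointness of the Hamiltonian so that the only surviving contribution comes from the potential mismatch. First I would fix a realization $\omega$, set $w := \psi_m^\veps - \psi^\veps$, and subtract \eqref{Schm} from \eqref{eqn:Sch} to obtain
\begin{align*}
i\epsilon\,\partial_t w = -\frac{\epsilon^2}{2}\Delta w + v_m^\veps\, w + (v_m^\veps - v^\veps)\psi^\veps,
\end{align*}
where I have written $v_m^\veps \psi_m^\veps - v^\veps \psi^\veps = v_m^\veps w + (v_m^\veps - v^\veps)\psi^\veps$ to separate the ``diagonal'' term $v_m^\veps w$ from the source term driven by the truncation error. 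Note that $w|_{t=0}=0$, since both problems share the initial datum $\psi_{\mathrm{in}}$.

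Next I would take the $L^2(D)$ inner product $(\cdot,\cdot)$ (with complex conjugation in the second slot) of this equation with $w$ and pass to real parts. Since $\tfrac{1}{2}\tfrac{\d}{\d t}\lVert w\rVert_{L^2(D)}^2 = \mathrm{Re}\,(\partial_t w, w)$, and since---using periodicity to discard boundary terms---both $-\tfrac{\epsilon^2}{2}(\Delta w, w) = \tfrac{\epsilon^2}{2}\lVert \nabla w\rVert_{L^2(D)}^2$ and $(v_m^\veps w, w) = \int_D v_m^\veps |w|^2\,\d\bx$ are real (the latter because $v_m^\veps$ is real-valued), multiplication by $\tfrac{1}{i\epsilon}=-\tfrac{i}{\epsilon}$ renders their contribution purely imaginary and they drop out of the real part. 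This leaves
\begin{align*}
\frac{1}{2}\frac{\d}{\d t}\lVert w\rVert_{L^2(D)}^2 = \mathrm{Re}\!\left(-\frac{i}{\epsilon}\big((v_m^\veps - v^\veps)\psi^\veps,\, w\big)\right) \le \frac{1}{\epsilon}\,\big\lVert (v_m^\veps - v^\veps)\psi^\veps\big\rVert_{L^2(D)}\,\lVert w\rVert_{L^2(D)}.
\end{align*}
Dividing by $\lVert w\rVert_{L^2(D)}$ gives $\tfrac{\d}{\d t}\lVert w\rVert_{L^2(D)} \le \tfrac{1}{\epsilon}\lVert(v_m^\veps - v^\veps)\psi^\veps\rVert_{L^2(D)}$; bounding $\lVert(v_m^\veps - v^\veps)\psi^\veps\rVert_{L^2(D)} \le \lVert v_m^\veps - v^\veps\rVert_{L^\infty(D)}\lVert\psi^\veps\rVert_{L^2(D)}$ and invoking mass conservation $\lVert\psi^\veps(t)\rVert_{L^2(D)} = \lVert\psi_{\mathrm{in}}\rVert_{L^2(D)} = 1$ for the Schr\"{o}dinger flow with real potential, then integrating from $0$ to $t$ (using $w|_{t=0}=0$) yields the pointwise-in-$\omega$ estimate $\lVert w(t,\cdot,\omega)\rVert_{L^2(D)} \le \tfrac{t}{\epsilon}\lVert v_m^\veps(\cdot,\omega) - v^\veps(\cdot,\omega)\rVert_{L^\infty(D)}$ for all $t\in[0,T]$.

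Finally I would lift this to the claimed statement by squaring, integrating over $\Omega$ against the probability measure, and taking square roots: the $L^2(\Omega)$ norm of the right-hand side is dominated by its essential supremum, i.e.\ by $\lVert v_m^\veps - v^\veps\rVert_{L^\infty(\Omega, D)}$, which gives \eqref{eqn:KLEpsierror}. The estimate is clean and requires no Gr\"{o}nwall exponential; the key structural point---which is where care is needed rather than a genuine obstacle---is that self-adjointness of $-\tfrac{\epsilon^2}{2}\Delta + v_m^\veps$ forces the diagonal term $v_m^\veps w$ to vanish in the real-part identity, so the growth of $\lVert w\rVert$ is driven solely by the source $(v_m^\veps - v^\veps)\psi^\veps$ and is hence linear in $t$, with the $\epsilon^{-1}$ factor coming directly from the $i\epsilon\,\partial_t$ scaling. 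The remaining items to state carefully are the mass conservation of $\psi^\veps$, the (standard) justification of dividing by $\lVert w\rVert_{L^2(D)}$ where it may vanish, and the regularity needed to differentiate $\lVert w\rVert_{L^2(D)}^2$ in time and integrate by parts.
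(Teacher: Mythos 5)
Your proposal is correct and follows essentially the same route as the paper's proof: subtract the two equations, run the $L^2$ energy estimate in which the self-adjoint terms drop out of the real part, bound the source term $(v_m^\veps - v^\veps)\psi^\veps$ via $\lVert v_m^\veps - v^\veps\rVert_{L^\infty}$ and mass conservation, and integrate in time. The only cosmetic difference is that you work pointwise in $\omega$ and then integrate over $\Omega$, whereas the paper carries the $L^2(\Omega,D)$ norm throughout; your version is arguably slightly cleaner in making the mass-conservation step explicit.
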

\begin{proof}
The difference $\delta\psi = \psi_m^{\veps}-\psi^{\veps}$ satisfies
\begin{equation*}
\left\{
\begin{aligned}
i\epsilon\partial_t\delta\psi&=-\frac{\epsilon^2}{2}\Delta\delta\psi
+v_m^{\epsilon} \delta\psi + (v_m^{\veps}-v^{\veps})\psi^{\veps},\quad \bx\in D,\quad t\in \mathbb{R},\\
\delta\psi &\in H_{\textrm{P}}^{1}(D),\\
\delta\psi|_{t=0}&=0.
\end{aligned}
\right.
\end{equation*}
By a direct calculation, we have
\begin{equation*}
	\frac{\mathrm{d}}{\mathrm{d}t}\lVert\delta\psi\rVert_{L^2(\Omega, D)}^2= \frac{1}{i\veps}\int_{\Omega}\int_{D} \left(\overline{\delta\psi}(v_m^{\veps}-v^{\veps})\psi^{\veps}
	-\overline{\psi^{\veps}}(v_m^{\veps}-v^{\veps})\delta\psi\right)
	\mathrm{d}\bx\mathrm{d}\rho(\omega),
	\end{equation*}
where $\rho(\omega)$ is the probability measure induced by the randomness in the potential \eqref{KLE-poterntial2} and thus
\begin{align*}
\frac{\mathrm{d}}{\mathrm{d}t}\lVert\delta\psi\rVert_{L^2(\Omega, D)}^2& \leq \frac{2}{\veps}\int_{\Omega}\int_{D} \lvert \overline{\delta\psi}(v_m^{\veps}-v^{\veps})\psi^{\veps}\rvert\mathrm{d}\bx\mathrm{d}\rho(\omega)
\leq \frac{2}{\veps}\int_{\Omega} \lVert \overline{\delta\psi}\rVert_{L^2(D)}\lVert v_m^{\veps}-v^{\veps})\psi^{\veps}\rVert_{L^2(D)}\mathrm{d}\rho(\omega), \\
& \leq \frac{2}{\veps}\int_{\Omega} \lVert \delta\psi\rVert_{L^2(D)}\lVert v_m^{\veps}-v^{\veps}\rVert_{L^{\infty}(D)}\mathrm{d}\rho(\omega)
\leq \frac{2\lVert v_m^{\veps}-v^{\veps}\rVert_{L^{\infty}(D,\Omega)}}{\veps} \lVert \delta\psi\rVert_{L^2(D,\Omega)}.
\end{align*}
Therefore, we obtain
\begin{align*}
\lVert\delta\psi\rVert_{L^2(\Omega, D)} \leq \frac{T}{\veps}\lVert v_m^{\veps}-v^{\veps}\rVert_{L^{\infty}(\Omega, D)}, \quad \forall t\in [0,T],
\end{align*}
which completes the proof.
\end{proof}

To analyze the qMC method, it is crucial to bound the mixed first derivatives of $\psi_m^\epsilon$ with respect to $\xi_i(\omega)$.
Denote $\boldsymbol{\xi}(\omega)=(\xi_1(\omega),\cdots,\xi_m(\omega))^T$ for convenience.
Let $\boldsymbol{\nu} = (\nu_1,\cdots,\nu_m)$ denote a multi-index of non-negative integers, with $\lvert\boldsymbol{\nu}\rvert = \sum_{j = 1}^m \nu_j$
and $\lvert\boldsymbol{\nu}\rvert_{\infty} = \max_{1\leq j\leq m} \nu_j$
. The value of $\nu_j$ determines the number of derivatives to be taken with respect to $\xi_j$,
and $\partial^{\boldsymbol{\nu}}\psi_m^{\veps}$ denotes the mixed derivative of $\psi_m^{\veps}$ with respect to all variables specified by the multi-index $\boldsymbol{\nu}$. 

\begin{lemma}\label{lemma:qmc1}
For any $\omega\in \Omega$, any time $T$, and for any multi-index $\boldsymbol{\nu}$ with $\lvert\boldsymbol{\nu}\rvert < \infty$, 
the partial derivative of $\psi_m^{\veps}(t, \bx, \omega)$ satisfies the following a-priori estimate
\begin{align}\label{eqn:estimate1}
	\lVert\partial^{\boldsymbol{\nu}}\psi_m^{\veps}(t,\cdot,\omega)\rVert_{L^2(D)}\leq \frac{ \lvert \boldsymbol{\nu}\rvert!\;T^{\lvert \boldsymbol{\nu}\rvert}}{\veps^{\lvert \boldsymbol{\nu}\rvert}} \left\{\prod_{\substack{j\ge 1}} \left(\sqrt{\lambda_j}\lVert v_j\rVert_{C^0(\bar{D})}\right)^{\nu_j} \right\},\quad\forall\; t\in [0,T].
	\end{align}
\end{lemma}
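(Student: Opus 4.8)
The plan is to exploit the affine dependence of the potential on the random variables, differentiate the governing equation with the Leibniz rule, convert the resulting inhomogeneous Schr\"{o}dinger equation into a recursive $L^2$ bound by an energy estimate, and close the recursion by induction on $\lvert\boldsymbol{\nu}\rvert$.

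First I would differentiate \eqref{Schm} by applying $\partial^{\boldsymbol{\nu}}$ to both sides and expanding $\partial^{\boldsymbol{\nu}}(v_m^{\veps}\psi_m^{\veps})$ with the Leibniz rule. The decisive simplification is that $v_m^{\veps}$ is affine in $\boldsymbol{\xi}$ by \eqref{KLE-poterntial2}, so $\partial^{\boldsymbol{\mu}} v_m^{\veps}$ vanishes unless $\boldsymbol{\mu}=\mathbf{0}$ or $\boldsymbol{\mu}=\mathbf{e}_j$ for some $j$, with $\partial^{\mathbf{e}_j} v_m^{\veps}=\sqrt{\lambda_j}\,v_j(\bx)$. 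Writing $w:=\partial^{\boldsymbol{\nu}}\psi_m^{\veps}$, this collapses the expansion to two terms and yields the inhomogeneous equation $i\veps\,\partial_t w = -\tfrac{\veps^2}{2}\Delta w + v_m^{\veps} w + \sum_{j:\,\nu_j\ge 1}\nu_j\sqrt{\lambda_j}\,v_j\,\partial^{\boldsymbol{\nu}-\mathbf{e}_j}\psi_m^{\veps}$, where the binomial coefficient $\binom{\boldsymbol{\nu}}{\mathbf{e}_j}=\nu_j$ appears. Since $\psi_{\textrm{in}}$ is independent of $\boldsymbol{\xi}$, the initial data satisfies $w|_{t=0}=0$ whenever $\lvert\boldsymbol{\nu}\rvert\ge 1$.

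Next I would run the same energy argument as in Lemma~\ref{lemma:KLerror}: computing $\tfrac{\d}{\d t}\|w\|_{L^2(D)}^2$ and observing that the Hermitian contributions $-\tfrac{\veps^2}{2}\Delta w$ and $v_m^{\veps}w$ are purely imaginary after pairing with $\overline{w}/(i\veps)$ --- using the periodic boundary condition (no boundary terms) and the reality of $v_m^{\veps}$ --- so that only the source term survives. The Cauchy--Schwarz inequality then gives $\tfrac{\d}{\d t}\|w\|_{L^2(D)} \le \tfrac{1}{\veps}\sum_{j:\,\nu_j\ge 1}\nu_j\sqrt{\lambda_j}\,\|v_j\|_{C^0(\bar{D})}\,\|\partial^{\boldsymbol{\nu}-\mathbf{e}_j}\psi_m^{\veps}\|_{L^2(D)}$, and integrating in time from the zero initial data produces a recursion relating order $\lvert\boldsymbol{\nu}\rvert$ to order $\lvert\boldsymbol{\nu}\rvert-1$. (Alternatively, Duhamel's formula together with the unitarity of the free propagator on $L^2(D)$ gives the same bound while bypassing the technical differentiation of $\|w\|$ at points where it vanishes.)

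Finally I would induct on $n=\lvert\boldsymbol{\nu}\rvert$. The base case $n=0$ is $L^2$-mass conservation, $\|\psi_m^{\veps}(t,\cdot,\omega)\|_{L^2(D)}=\|\psi_{\textrm{in}}\|_{L^2(D)}\le 1$, under the standard normalization of the initial wavefunction, matching the empty product and unit prefactor on the right-hand side of \eqref{eqn:estimate1}. In the inductive step, substituting the hypothesis for each $\partial^{\boldsymbol{\nu}-\mathbf{e}_j}\psi_m^{\veps}$ cancels the factor $\sqrt{\lambda_j}\|v_j\|_{C^0(\bar{D})}$ against the corresponding factor in the source, the surviving product is exactly $\prod_j(\sqrt{\lambda_j}\|v_j\|_{C^0(\bar{D})})^{\nu_j}$, and the time integral $\int_0^t s^{n-1}\,\d s = t^n/n$ combines with $\sum_j \nu_j = \lvert\boldsymbol{\nu}\rvert = n$ and the prefactor $1/\veps$ to reproduce $\lvert\boldsymbol{\nu}\rvert!\,t^{\lvert\boldsymbol{\nu}\rvert}/\veps^{\lvert\boldsymbol{\nu}\rvert}$ (in fact one obtains the sharper $(n-1)!\,t^n$, which is bounded by the stated $n!\,t^n$); then $t\le T$ gives \eqref{eqn:estimate1}. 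I expect the main obstacle to be the careful bookkeeping in the energy estimate --- verifying that the Hermitian terms drop out and correctly tracking the combinatorial weights $\nu_j$ and the factorial/power accumulation through the induction --- rather than any deep analytic difficulty, since the affine structure of the potential makes the Leibniz expansion collapse to only two terms.
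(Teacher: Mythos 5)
Your proposal is correct and follows essentially the same route as the paper's proof: differentiate \eqref{Schm} in the random variables, use the affine structure of the KL expansion to collapse the Leibniz sum to first-order terms in $v_m^{\veps}$, run the $L^2$ energy estimate in which the Hermitian terms cancel, and close by induction on $\lvert\boldsymbol{\nu}\rvert$. Your bookkeeping is in fact slightly tighter (you keep the exact coefficient $\nu_j$ and the time integral, yielding $(\lvert\boldsymbol{\nu}\rvert-1)!\,t^{\lvert\boldsymbol{\nu}\rvert}$, and you make the implicit normalization $\lVert\psi_{\textrm{in}}\rVert_{L^2(D)}\le 1$ explicit as the base case), but this does not change the argument.
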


\begin{proof}
	When $\lvert\boldsymbol{\nu}\rvert = 1$, we take the derivative of \eqref{Schm} with respect to $\xi_j(\omega)$. Let
	$\partial_j\psi_m = \partial_{\xi_j} \psi_m^{\veps}$ and $\partial_j v_m = \partial_{\xi_j} v_m^{\veps}$, we have
	\begin{equation}\label{eqn:SchwrtRv}
	i\veps \left(\partial_j\psi_m \right)_t = -\frac{\veps^2}{2}\Delta \left(\partial_j\psi_m \right) + 
	\left(\partial_j v_m \right) \psi_m^{\veps} + v_m^{\veps} \left(\partial_j\psi_m\right).\nonumber
	\end{equation}
	Thereafter, we have the following estimate by a direction calculation
	\begin{align}
	\frac{\mathrm{d}}{\mathrm{d}t}\lVert \partial_j\psi_m \rVert^2_{L^2(D)} & = \int _{D} \left\{\left(\overline{\partial_j\psi_m}\right)_t\left(\partial_j\psi_m\right) + 
	\left(\overline{\partial_j\psi_m}\right)\left(\partial_j\psi_m\right)_t\right\}\mathrm{d} \bx, \nonumber\\
	& = \int _{D} \left(-\frac{1}{i\veps}\left(\partial_j v_m\right)\overline{\psi_m^{\veps}}\left(\partial_j\psi_m\right) + \frac{1}{i\veps} \left(\overline{\partial_j\psi_m}\right) \left(\partial_j v\right)\psi_m^{\veps}\right)\mathrm{d} \bx, \nonumber\\
	& \leq \frac{2}{\veps} \lVert \partial_j\psi_m \rVert_{L^2(D)}  \lVert \partial_j v \psi_m^{\veps} \rVert_{L^2(D)}
	\leq \frac{2}{\veps} \lVert \partial_j\psi_m \rVert_{L^2(D)}  \lVert \partial_j v_m  \rVert_{L^\infty(D)},\nonumber
	\end{align}
	and
	\begin{align}\label{eqn:regrandom1}
	\lVert \partial_j \psi_m \rVert_{L^2(D)}\leq \frac{T}{\veps}\lVert \partial_j v_m  \rVert_{L^\infty(D)} \leq \frac{T}{\veps}\sqrt{\lambda_j}\lVert \phi_j  \rVert_{C^0(\bar{D})}.
	\end{align}
	
	When $\lvert\boldsymbol{\nu}\rvert \ge 2$, we have
	\begin{align}
	i\veps \left(\partial^{\boldsymbol{\nu}} \psi_m \right)_t = -\frac{\veps^2}{2}\Delta \left(\partial^{\boldsymbol{\nu}}\psi_m \right) + 
	\sum_{\substack{\boldsymbol{\mu}\preceq \boldsymbol{\nu} \\ \boldsymbol{\mu}\neq \boldsymbol{\nu}}} \binom{\boldsymbol{\nu}}{\boldsymbol{\mu}}\left(\partial^{\boldsymbol{\nu}-\boldsymbol{\mu}} v_m \right) \left(\partial^{\boldsymbol{\mu}}\psi_m\right) + v_m^{\veps} \left(\partial^{\boldsymbol{\nu}}\psi_m\right).\nonumber
	\end{align}
	According to the definition of the random potential \eqref{KLE-poterntial}, we have $\partial^{\boldsymbol{\nu}-\boldsymbol{\mu}} v_m^{\veps} = 0$ if $\lvert \boldsymbol{\nu}-\boldsymbol{\mu}\rvert \ge 2$. Thus the above equation can be simplified as
	\begin{align}
	i\veps \left(\partial^{\boldsymbol{\nu}} \psi_m \right)_t = -\frac{\veps^2}{2}\Delta \left(\partial^{\boldsymbol{\nu}}\psi_m \right) + 
	\sum_{\lvert\boldsymbol{\nu}-\boldsymbol{\mu}\rvert = 1} \binom{\lvert\boldsymbol{\nu}\rvert}{1}
	\left(\partial^{\boldsymbol{\nu}-\boldsymbol{\mu}} v_m \right) \left(\partial^{\boldsymbol{\mu}}\psi_m\right) + v_m^{\veps} \left(\partial^{\boldsymbol{\nu}}\psi_m\right).\nonumber
	\end{align}
	
	Similarly, we obtain
	\begin{align}
	\frac{\mathrm{d}}{\mathrm{d}t}\lVert \partial^{\boldsymbol{\nu}} \psi_m \rVert^2_{L^2(D)} & = \int _{D} \left\{\left(\overline{\partial^{\boldsymbol{\nu}}\psi_m}\right)_t\left(\partial^{\boldsymbol{\nu}}\psi_m\right) + 
	\left(\overline{\partial^{\boldsymbol{\nu}}\psi_m}\right)\left(\partial^{\boldsymbol{\nu}}\psi_m\right)_t\right\}\mathrm{d} \bx, \nonumber\\
	& = \sum_{\lvert\boldsymbol{\nu}-\boldsymbol{\mu}\rvert = 1} \binom{\lvert\boldsymbol{\nu}\rvert}{1} \int _{D} \left(-\frac{1}{i\veps}
	\left(\partial^{\boldsymbol{\nu}-\boldsymbol{\mu}} v_m \right) \overline{\left(\partial^{\boldsymbol{\mu}}\psi_m\right)}\left(\partial^{\boldsymbol{\nu}}\psi_m\right) + \frac{1}{i\veps}  \left(\overline{\partial^{\boldsymbol{\nu}}\psi_m}\right)
	\left(\partial^{\boldsymbol{\nu}-\boldsymbol{\mu}} v_m \right) \left(\partial^{\boldsymbol{\mu}}\psi_m\right)\right)\mathrm{d} \bx, \nonumber\\
	& \leq \frac{2\lvert \boldsymbol{\nu}\rvert}{\veps} \lVert \partial^{\boldsymbol{\nu}}\psi_m \rVert_{L^2(D)}  
	\sum_{\lvert\boldsymbol{\nu}-\boldsymbol{\mu}\rvert = 1}
	\lVert \left(\partial^{\boldsymbol{\nu}-\boldsymbol{\mu}} v_m \right)\rVert_{L^{\infty}(D)}\lVert \left(\partial^{\boldsymbol{\mu}}\psi_m\right) \rVert_{L^2(D)}, \nonumber
	\end{align}
	and
	\begin{align}\label{eqn:regrandom2}
	\lVert \partial^{\boldsymbol{\nu}} \psi_m \rVert_{L^2(D)} 
	\leq \frac{T\lvert \boldsymbol{\nu}\rvert}{\veps} 
	\sum_{\lvert\boldsymbol{\nu}-\boldsymbol{\mu}\rvert = 1}
	\lVert \left(\partial^{\boldsymbol{\nu}-\boldsymbol{\mu}} v_m \right)\rVert_{L^{\infty}(D)}\lVert \left(\partial^{\boldsymbol{\mu}}\psi_m\right) \rVert_{L^2(D)}.
	\end{align}
Now we are ready to prove the theorem by mathematical induction. From \eqref{eqn:regrandom1}, we know that \eqref{eqn:estimate1} holds for $\lvert \boldsymbol{\nu}\rvert = 1$. Assume that \eqref{eqn:estimate1} holds for $\boldsymbol{\mu}$ with $\lvert \boldsymbol{\nu}-\boldsymbol{\mu}\rvert = 1$. Substituting this into \eqref{eqn:regrandom2} yields the desired estimate for the $\boldsymbol{\nu}$ case. 
\end{proof}

\begin{remark}
The above derivation is similar to that in \cite{jin2019gaussian}, where an estimate in $L^2(D,\Omega)$ norm is obtained. Here, for each random realization $\omega$, we have the esitmate \eqref{eqn:estimate1} in $L^2(D)$ norm, which will be used to prove the convergence in qMC.
\end{remark}

\subsection{Main result of the error analysis} \label{sec:erroranalysis}
\noindent  
In the framework of uncertainty quantification, we are interested in computing some statistical quantities of the electron wavefunction. 
As such, we shall present the error analysis of our method in computing functionals of $\psi_m^{\veps}$.

Let $\mathcal{G}(\cdot)$ be a continuous linear functional on $L^2(D)$, then there exists a constant $C_{\mathcal{G}}$ such that
\[
\lvert \mathcal{G}(u) \rvert \leq C_{\mathcal{G}} \lVert u\rVert_{L^2(D)},
\]
for all $u\in L^2(D)$. Consider the following integral
\begin{equation}\label{eqn:integral_highdim}
I_m(F) = \int_{\boldsymbol{\xi}\in [0,1]^m} F(\boldsymbol{\xi})\mathrm{d}\boldsymbol{\xi}
\end{equation}
with $F(\boldsymbol{\xi})=\mathcal{G}(\psi_m^{\veps}(\cdot,\boldsymbol{\xi}))$.
We approximate the integral over the unit cube by randomly shifted lattice
rules
\[
Q_{m,n}(\boldsymbol{\Delta};F)\triangleq \frac{1}{n}\sum_{i=1}^{n} F\big(\textrm{frac}(\frac{i\boldsymbol{z}}{n}+\boldsymbol{\Delta})\big),
\]
where $\boldsymbol{z}\in\mathbb{N}^m$ is the (deterministic) generating vector
and $\boldsymbol{\Delta}\in [0,1]^m$ is the random shift which is uniformly
distributed over $[0,1]^m$. Notice that $m$ is the dimension of the random vector $\boldsymbol{\xi}$ 
in the random potential and $n$ is the number of the sample point in implementing the qMC method.
The interested reader is referred to \cite{dick2013high} for more details of the randomly shifted lattice rules in the qMC method. 
\begin{lemma}\label{lemma:qmc2}
Let $F$ be the integrand in \eqref{eqn:integral_highdim}. Given $m,n\in\mathbb{N}$ with $n\le 10^{30}$, weights $\boldsymbol{\gamma}=(\gamma_{\boldsymbol{\mathfrak{u}}})_{\boldsymbol{\mathfrak{u}}\subset \mathbb{N}}$, a randomly shifted lattice rule with $n$ points in $m$ dimensions can be constructed by a component-by-component algorithm such
	that, for all $\lambda\in(1/2,1]$,
	\begin{equation}\label{eqn:qmcerror}
	\sqrt{\mathbb{E}^{\boldsymbol{\Delta}}\lvert I_m(F)-Q_{m,n}(\cdot; F)\rvert^2}
	\leq 9C^*C_{\boldsymbol{\gamma},m}(\lambda) n^{-1/(2\lambda)},
	\end{equation}
	with
	\begin{equation}\label{eqn:qmcconst}
	C_{\boldsymbol{\gamma},m}(\lambda) =\left(\sum_{\emptyset\neq \boldsymbol{\mathfrak{u}}\subseteq \{1:m\}} \gamma_{\boldsymbol{\mathfrak{u}}}^{\lambda}\prod_{j\in\boldsymbol{\mathfrak{u}}}\varrho(\lambda)\right)^{1/(2\lambda)}\left(\sum_{\mathfrak{u}\subseteq \{1:m\}}\dfrac{(\lvert\boldsymbol{\mathfrak{u}}\rvert!)^2 T^{2\lvert\boldsymbol{\mathfrak{u}}\rvert}}{\gamma_{\boldsymbol{\mathfrak{u}}}\veps^{2\lvert\boldsymbol{\mathfrak{u}}\rvert}}
	\prod_{j\in\boldsymbol{\mathfrak{u}}} \lambda_j\lVert\phi_j\rVert_{C^0(\bar{D})}^2\right)^{1/2}.
	\end{equation}
\end{lemma}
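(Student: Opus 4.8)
The plan is to decompose the argument into a dimension-robust ``black-box'' lattice-rule estimate and a PDE-specific bound on the weighted Sobolev norm of the integrand $F$, with Lemma~\ref{lemma:qmc1} supplying the latter.

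First I would invoke the standard convergence theorem for randomly shifted lattice rules constructed by the component-by-component (CBC) algorithm in the weighted unanchored Sobolev space; see \cite{dick2013high}. For any integrand with square-integrable mixed first derivatives, that theorem yields, for every $\lambda\in(1/2,1]$ and every $n\le 10^{30}$,
\[
\sqrt{\mathbb{E}^{\boldsymbol{\Delta}}\lvert I_m(F)-Q_{m,n}(\cdot;F)\rvert^2}
\le 9\left(\sum_{\emptyset\neq\boldsymbol{\mathfrak{u}}\subseteq\{1:m\}}\gamma_{\boldsymbol{\mathfrak{u}}}^{\lambda}\prod_{j\in\boldsymbol{\mathfrak{u}}}\varrho(\lambda)\right)^{1/(2\lambda)} n^{-1/(2\lambda)}\,\lVert F\rVert_{m,\boldsymbol{\gamma}},
\]
where $\varrho(\lambda)=2\zeta(2\lambda)/(2\pi^2)^{\lambda}$ is the usual qMC constant and the universal factor $9$ together with the restriction $n\le10^{30}$ absorbs the number-theoretic (Euler totient) estimates. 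Here $\lVert F\rVert_{m,\boldsymbol{\gamma}}$ is the weighted norm with $\lVert F\rVert_{m,\boldsymbol{\gamma}}^2=\sum_{\boldsymbol{\mathfrak{u}}\subseteq\{1:m\}}\gamma_{\boldsymbol{\mathfrak{u}}}^{-1}\int_{[0,1]^{\lvert\boldsymbol{\mathfrak{u}}\rvert}}(\int_{[0,1]^{m-\lvert\boldsymbol{\mathfrak{u}}\rvert}}\partial^{\boldsymbol{\mathfrak{u}}}F\,\d\boldsymbol{\xi}_{-\boldsymbol{\mathfrak{u}}})^2\,\d\boldsymbol{\xi}_{\boldsymbol{\mathfrak{u}}}$, with $\partial^{\boldsymbol{\mathfrak{u}}}$ the mixed first derivative in the coordinates indexed by $\boldsymbol{\mathfrak{u}}$. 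This already reproduces the first factor of $C_{\boldsymbol{\gamma},m}(\lambda)$ and the $n^{-1/(2\lambda)}$ rate; it remains to dominate $\lVert F\rVert_{m,\boldsymbol{\gamma}}$ by the second factor, which is the heart of the matter.

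Next I would bound the mixed derivatives of $F$. Since $F(\boldsymbol{\xi})=\mathcal{G}(\psi_m^{\veps}(\cdot,\boldsymbol{\xi}))$ with $\mathcal{G}$ a bounded linear functional, differentiation in the parameters passes through $\mathcal{G}$, so $\partial^{\boldsymbol{\mathfrak{u}}}F=\mathcal{G}(\partial^{\boldsymbol{\nu}}\psi_m^{\veps})$ where $\boldsymbol{\nu}$ is the $\{0,1\}$-valued indicator multi-index of $\boldsymbol{\mathfrak{u}}$, so that $\lvert\boldsymbol{\nu}\rvert=\lvert\boldsymbol{\mathfrak{u}}\rvert$. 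Consequently $\lvert\partial^{\boldsymbol{\mathfrak{u}}}F\rvert\le C_{\mathcal{G}}\lVert\partial^{\boldsymbol{\nu}}\psi_m^{\veps}\rVert_{L^2(D)}$, and Lemma~\ref{lemma:qmc1} supplies the uniform-in-$\boldsymbol{\xi}$ estimate $\lvert\partial^{\boldsymbol{\mathfrak{u}}}F\rvert\le C_{\mathcal{G}}\,(\lvert\boldsymbol{\mathfrak{u}}\rvert!\,T^{\lvert\boldsymbol{\mathfrak{u}}\rvert}/\veps^{\lvert\boldsymbol{\mathfrak{u}}\rvert})\prod_{j\in\boldsymbol{\mathfrak{u}}}\sqrt{\lambda_j}\,\lVert\phi_j\rVert_{C^0(\bar{D})}$. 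Because this bound is independent of $\boldsymbol{\xi}$ and $[0,1]^m$ has unit measure, both the inner and the outer integrals in $\lVert F\rVert_{m,\boldsymbol{\gamma}}^2$ are controlled by the square of this supremum, giving $\lVert F\rVert_{m,\boldsymbol{\gamma}}^2\le C_{\mathcal{G}}^2\sum_{\boldsymbol{\mathfrak{u}}\subseteq\{1:m\}}\frac{(\lvert\boldsymbol{\mathfrak{u}}\rvert!)^2T^{2\lvert\boldsymbol{\mathfrak{u}}\rvert}}{\gamma_{\boldsymbol{\mathfrak{u}}}\veps^{2\lvert\boldsymbol{\mathfrak{u}}\rvert}}\prod_{j\in\boldsymbol{\mathfrak{u}}}\lambda_j\lVert\phi_j\rVert_{C^0(\bar{D})}^2$. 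Taking square roots and setting $C^{*}=C_{\mathcal{G}}$ turns this into exactly the second factor of $C_{\boldsymbol{\gamma},m}(\lambda)$, and combining with the first paragraph completes the proof.

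The hard part is not the algebra but the two justifications underpinning the second step. The first is that $\mathcal{G}$ may be interchanged with the parametric derivatives; this requires $\psi_m^{\veps}$ to be a sufficiently smooth $L^2(D)$-valued map of $\boldsymbol{\xi}$, which is exactly the regularity underlying Lemma~\ref{lemma:qmc1}. The second, and more delicate, point is that the derivative estimate must hold uniformly over the entire parameter cube rather than only on average --- this is precisely why Lemma~\ref{lemma:qmc1} was deliberately stated pointwise in $\omega$ (in the $L^2(D)$ norm) rather than in the averaged $L^2(D,\Omega)$ norm, as the remark following it anticipates. A final modelling caveat I would flag is that the parametrization must place the $\xi_j$ on $[0,1]^m$ under Lebesgue measure, so that $I_m(F)$ is genuinely the expectation the shifted lattice rule approximates; any nontrivial density would have to be folded into $F$ or into the weights $\gamma_{\boldsymbol{\mathfrak{u}}}$.
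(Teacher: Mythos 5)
Your proposal is correct and follows essentially the same route as the paper, which simply invokes the CBC shifted-lattice-rule error bound in the weighted unanchored Sobolev space (Theorems 15--17 of Graham et al.\ / Theorem 5.10 of Dick--Kuo--Sloan) and controls the weighted norm of $F$ via the mixed-derivative estimate of Lemma~\ref{lemma:qmc1}, with $C^*$ the operator norm of $\mathcal{G}$. The only divergence is cosmetic: you take $\varrho(\lambda)=2\zeta(2\lambda)/(2\pi^2)^{\lambda}$ (the standard constant for uniformly distributed parameters on the unit cube), whereas the paper's proof instead adopts the constant $\varrho(\lambda) = 2\bigl(\sqrt{2\pi}/(\pi^{2-2\eta_*}(1-\eta_*)\eta_*)\bigr)^{\lambda}\zeta(\lambda+\tfrac12)$ with $\eta_*=(2\lambda-1)/(4\lambda)$ from the transformed (lognormal) setting of \cite{graham2015quasi}; since $\varrho$ is left undefined in the lemma statement, either choice yields a valid bound of the stated form.
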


\begin{proof} 
The proof of this result is essentially an application of the Koksma-Hlawka inequality, which is the same as the proofs of Theorem 15, Theorem 16, and Theorem 17 in \cite{graham2015quasi}, or Theorem 5.10 in \cite{dick2013high} with the following modification of estimates:
	\begin{equation}
	\varrho(\lambda) = 2\left(\dfrac{\sqrt{2\pi}}{\pi^{2-2\eta_*}(1-\eta_*)\eta_*}\right)^{\lambda}\zeta(\lambda+\frac12), \quad \eta_*=\frac{2\lambda-1}{4\lambda}
	\end{equation}
	with $\zeta(x)=\sum_{j=1}^{\infty} j^{-x}$ the Riemann zeta function,
	and $C^*=\lVert \mathcal{G} \rVert_{L^2(D)}$.
\end{proof}
To analyze the error of our method, we need to make some assumptions on the regularity of the 
eigenfunctions and the decay rate of the eigenvalues in the KL expansion \eqref{KLE-poterntial2} 
of the random potential.

\begin{assumption} \label{regularity-decay-kle}
\begin{enumerate}
\item[(a)] There exist $C>0$ and $\Theta>1$ such that $\lambda_j \leq C j^{-\Theta}$ for $j\ge 1$;
\item[(b)] The Karhunen-Lo\'{e}ve eigenfunctions $v_j(\bx)$ are continuous and there exist $C>0$ and $\eta\in [0,\frac{\Theta-1}{2\Theta})$ such that
		$\lVert v_j \rVert_{C^0(\bar{D})}\leq C\lambda_j^{-\eta}$ for $j\ge 1$;
\item [(c)] The sequence defined by $\sqrt{\lambda_j}\lVert v_j\rVert_{C^0(\bar{D})}, \; j\ge 1$ satisfies $\sum_{\substack{j\ge 1}} \left(\sqrt{\lambda_j}\lVert v_j\rVert_{C^0(\bar{D})}\right)^p < \infty$ for some $p\in (0,1]$, and $\sum_{\substack{j\ge 1}} \sqrt{\lambda_j}\lVert v_j\rVert_{C^0(\bar{D})} < \frac{\veps}{T}\sqrt{\varrho(\lambda)}$ for $\lambda\in (1/2,1]$.
\end{enumerate}
\end{assumption}	

Recall that $\psi^{\veps}$ and $\psi^{\veps}_{m}$ are solutions to \eqref{eqn:Sch} and \eqref{Schm}, respectively. 
Denote $\psi_{H,m}^{\veps}$ the solution obtained by our method using the multiscale reduced basis functions 
in the physical space and the qMC method in the random space. Under the assumptions for the random potential, we have the error estimate.
\begin{theorem}\label{main-result}
Consider the approximation of $\mathbb{E}[\mathcal{G}(\psi^{\veps})]$ via qMC multiscale finite element methods, denoted by $Q_{m,n}(\cdot;\mathcal{G}(\psi_{H,m}^{\veps}))$, where we assume $\psi^{\veps}\in L^2(\Omega; H^2(D))$. A randomly shifted lattice rule $Q_{m,n}$ is applied to $\mathcal{G}(\psi_m^{\veps})$. Then, we can bound the root-mean-square error with respect to the uniformly distributed shift $\boldsymbol{\Delta}\in[0,1]^m$ by
\begin{equation}
\sqrt{\mathbb{E}^{\boldsymbol{\Delta}} \left[\left(\mathbb{E}[\mathcal{G}(\psi^{\veps})]-Q_{m,n}(\cdot;\mathcal{G}(\psi_{H,m}^{\veps}))\right)^2\right]} \leq C\left(\frac{H^2}{\veps^2} + \frac{m^{-\chi}}{\veps} + n^{-r}\right),\quad 0< t\leq T,
\label{main-estimate-rate}
\end{equation}
for $0<\chi\leq(1/2-\eta)\Theta-1/2$, and with $r=1/p-1/2$ for $p\in(2/3,1]$ and $r=1-\delta$ for $p\leq 2/3$, with $\delta$ arbitrarily small. Here the constant $C$ is independent of $\veps$, $m$, and $n$ but depends on $T$.
\end{theorem}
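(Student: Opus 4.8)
The plan is to control the root-mean-square error in \eqref{main-estimate-rate} by a triangle inequality that isolates the three error mechanisms---Karhunen-Lo\`eve truncation, qMC quadrature, and spatial discretization---each matched to one term on the right-hand side. Setting $F(\boldsymbol{\xi})=\mathcal{G}(\psi_m^{\veps}(\cdot,\boldsymbol{\xi}))$ so that $\mathbb{E}[\mathcal{G}(\psi_m^{\veps})]=I_m(F)$, I insert the intermediate quantities $I_m(F)$ and $Q_{m,n}(\cdot;F)$ and write
\begin{align*}
\mathbb{E}[\mathcal{G}(\psi^{\veps})]-Q_{m,n}(\cdot;\mathcal{G}(\psi_{H,m}^{\veps}))
&= \underbrace{\big(\mathbb{E}[\mathcal{G}(\psi^{\veps})]-I_m(F)\big)}_{\text{(I) KL truncation}} \\
&\quad + \underbrace{\big(I_m(F)-Q_{m,n}(\cdot;F)\big)}_{\text{(II) qMC quadrature}} \\
&\quad + \underbrace{Q_{m,n}\!\big(\cdot;\mathcal{G}(\psi_m^{\veps}-\psi_{H,m}^{\veps})\big)}_{\text{(III) spatial discretization}}.
\end{align*}
Term (I) is deterministic and term (III) is bounded uniformly in the random shift $\boldsymbol{\Delta}$, so Minkowski's inequality in $L^2_{\boldsymbol{\Delta}}$ reduces the estimate to bounding $|(\mathrm{I})|$, $\sqrt{\mathbb{E}^{\boldsymbol{\Delta}}|(\mathrm{II})|^2}$, and $\sup_{\boldsymbol{\Delta}}|(\mathrm{III})|$ and summing.

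For term (I), linearity and continuity of $\mathcal{G}$ give $|(\mathrm{I})|\le C_{\mathcal{G}}\,\mathbb{E}[\lVert\psi^{\veps}-\psi_m^{\veps}\rVert_{L^2(D)}]$, which Lemma \ref{lemma:KLerror} bounds by $\tfrac{T}{\veps}\lVert v^{\veps}-v_m^{\veps}\rVert_{L^\infty(\Omega,D)}$. I then estimate the KL tail $v^{\veps}-v_m^{\veps}=\sum_{i>m}\sqrt{\lambda_i}\,\xi_i v_i$ using the almost-sure boundedness \eqref{asBoundedPotential} together with Assumption \ref{regularity-decay-kle}(a)--(b), which yield $\sqrt{\lambda_i}\lVert v_i\rVert_{C^0(\bar D)}\le C\,i^{-(1/2-\eta)\Theta}$; summing this tail produces a bound of order $m^{-\chi}$ for $\chi$ in the stated range, and hence the $m^{-\chi}/\veps$ contribution.

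Term (II) is precisely the situation of Lemma \ref{lemma:qmc2} applied to $F=\mathcal{G}(\psi_m^{\veps})$, whose mixed-derivative regularity is supplied by Lemma \ref{lemma:qmc1}. The remaining work is to choose the weights $\boldsymbol{\gamma}$ so that the constant $C_{\boldsymbol{\gamma},m}(\lambda)$ in \eqref{eqn:qmcconst} stays bounded as $m\to\infty$ and then to optimize over $\lambda\in(1/2,1]$. Here I follow the product-and-order-dependent weight construction of \cite{graham2015quasi,dick2013high}: the $p$-summability in Assumption \ref{regularity-decay-kle}(c) permits a choice of weights for which $C_{\boldsymbol{\gamma},m}(\lambda)$ is finite uniformly in $m$ at the admissible $\lambda$, and optimizing the factor $n^{-1/(2\lambda)}$ gives $r=1/p-1/2$ for $p\in(2/3,1]$ and $r=1-\delta$ for $p\le 2/3$.

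Term (III) is the spatial error. Since $Q_{m,n}$ averages $n$ point evaluations and $\mathcal{G}$ is continuous, $\sup_{\boldsymbol{\Delta}}|(\mathrm{III})|\le C_{\mathcal{G}}\max_{1\le i\le n}\lVert(\psi_m^{\veps}-\psi_{H,m}^{\veps})(\cdot,\boldsymbol{\xi}_i)\rVert_{L^2(D)}$, so it suffices to bound the multiscale-reduced-basis Galerkin error for a single arbitrary realization. For this I combine the best-approximation property of the multiscale finite element space under the resolution condition \eqref{eqn:meshcondition} with a Galerkin energy estimate that uses the unitarity (mass conservation) of the Schr\"odinger flow to propagate the spatial projection error in time without growth; tracking the $\veps$-weights in the Hamiltonian and the regularity $\lVert\psi^{\veps}\rVert_{H^2(D)}\sim\veps^{-2}$ then yields the $H^2/\veps^2$ rate, with uniformity over all sampled $\omega$ guaranteed by the continuous dependence on the potential in Theorem \ref{basis-dependon-potential}. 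I expect this estimate to be the main obstacle: unlike (I) and (II) it is not an immediate consequence of the stated lemmas, but requires the approximation theory of the optimized multiscale basis (whose decay and approximation properties are deferred to \cite{ChenMaZhang:prep}), an $\veps$-explicit time-dependent Galerkin analysis, and control of the extra POD truncation. Summing the three contributions gives \eqref{main-estimate-rate}.
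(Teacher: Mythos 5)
Your proposal is correct and follows the same overall strategy as the paper: a triangle-inequality split into spatial discretization, KL truncation, and qMC quadrature errors, each controlled by the same ingredients (Lemma \ref{lemma:KLerror}, Lemma \ref{lemma:qmc1} feeding into Lemma \ref{lemma:qmc2}, and the multiscale-FEM approximation theory deferred to \cite{ChenMaZhang:prep} and the weight construction of \cite{graham2015quasi}). The one genuine difference is the arrangement of the decomposition. The paper first writes $\mathcal{G}(\psi^{\veps})-\mathcal{G}(\psi^{\veps}_{H,m})=\bigl(\mathcal{G}(\psi^{\veps})-\mathcal{G}(\psi_H^{\veps})\bigr)+\bigl(\mathcal{G}(\psi_H^{\veps})-\mathcal{G}(\psi_{H,m}^{\veps})\bigr)$ and bounds the spatial and truncation pieces at the level of expectations, then invokes the qMC estimate; you instead put the qMC error on the semi-discrete integrand $F=\mathcal{G}(\psi_m^{\veps})$ and push the spatial error \emph{inside} the quadrature sum as $Q_{m,n}\bigl(\cdot;\mathcal{G}(\psi_m^{\veps}-\psi_{H,m}^{\veps})\bigr)$. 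Your arrangement is arguably the more honest one: since the lattice rule evaluates $\psi_{H,m}^{\veps}$ at specific sample points, what is really needed is a spatial error bound uniform over the sampled realizations, which you correctly flag as requiring the pathwise Galerkin analysis (and control of the POD truncation) rather than only the bound in expectation \eqref{OC-error} that the paper states; the paper glosses over this by citing \cite{ChenMaZhang:prep}. Your explicit tail estimate $\sqrt{\lambda_i}\lVert v_i\rVert_{C^0(\bar D)}\le C\,i^{-(1/2-\eta)\Theta}$ for term (I) also fills in a step the paper delegates to Theorem 8 of \cite{graham2015quasi}. What you lose is nothing of substance; what you gain is a cleaner accounting of where the uniform-in-$\omega$ spatial estimate is actually used.
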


\begin{proof}
The linearity of operator $\mathcal{G}$ implies
\begin{equation}\label{triangle-split}
\mathcal{G}(\psi^{\veps})-\mathcal{G}(\psi^{\veps}_{H,m}) =
\mathcal{G}(\psi^{\veps})-\mathcal{G}(\psi_H^{\veps})
+\mathcal{G}(\psi_H^{\veps})-\mathcal{G}(\psi_{H,m}^{\veps}).	
\end{equation}
Under the assumption $\psi^{\veps}\in L^2(\Omega; H^2(D))$, we have, see for example \cite{ChenMaZhang:prep},
\begin{equation}\label{OC-error}
	\lvert \mathbb{E}[\mathcal{G}(\psi^{\veps})-\mathcal{G}(\psi_H^{\veps})]
\rvert \leq C \frac{H^2}{\veps^2}.
\end{equation} 
Under the assumptions (b) and (c) in Assumption \ref{regularity-decay-kle}, we have, based on Lemma \ref{lemma:KLerror},
\begin{equation}\label{kle-truncation-error}
	\lvert \mathcal{G}(\psi_H^{\veps})-\mathcal{G}(\psi_{H,m}^{\veps}) \rvert
	\leq C \frac{m^{-\chi}}{\veps}
\end{equation} 
for all $0 < \chi \leq (1/2-\eta)\Theta-1/2$. Detailed derivation is essentially the same as the proof of Theorem 8 in \cite{graham2015quasi}.
	
Finally, when applying the qMC method to \eqref{triangle-split}, we need to analyze the error in the qMC method.
We adopt the standard framework, i.e., the Koksma-Hlawka inequality. Under \textbf{Assumption \ref{regularity-decay-kle}}, 
we have, based on Lemma \ref{lemma:qmc1} and Lemma \ref{lemma:qmc2},
\begin{equation}\label{qMC-error}
	\sqrt{\mathbb{E}^{\boldsymbol{\Delta}}\lvert I_m(F) - Q_{m,n}(\cdot; F) \rvert^2} \leq C n^{-r},
\end{equation} 
	where $r=1/p-1/2$ for $p\in(2/3,1]$
	and $r=1-\delta$ for $p\leq 2/3$, with $\delta$ arbitrarily small.
	Detailed derivation is essentially the same as the proof of Theorem 20 in \cite{graham2015quasi}.
	A combination of above estimates completes the proof.
\end{proof}
\begin{remark}\label{explain-qMC22} 
The term $\frac{m^{-\chi}}{\veps}$ in the error estimate \eqref{main-estimate-rate} can be viewed as a modeling error. When the $m$-term KL truncation potential $v^{\epsilon}_{m}(\bx,\omega)$ in \eqref{KLE-poterntial2}
provides an accurate approximation to the potential $v^{\epsilon}(\bx,\omega)$, the term $\frac{m^{-\chi}}{\veps}$ becomes small or negligible. 
\end{remark}
\begin{remark}\label{explain-qMC2}
	In \secref{sec:NumericalExamples}, we will show the proposed method works well for a large class of random potentials, even when the eigenvalues 
	in the KL expansion have a relatively slow decay rate. Therefore, \textbf{Assumption \ref{regularity-decay-kle}} is a rather technical assumption
	for the convergence analysis of the proposed method.
\end{remark}
\begin{remark}\label{explain-qMC3} 
In the error analysis for the qMC method, we assume $\boldsymbol{\xi}=(\xi_1,...,\xi_m)\in [0,1]^m$ for notational convenience; see \eqref{eqn:integral_highdim}, where $\xi_i$ are i.i.d. uniform random variables. In the KL expansion \eqref{KLE-poterntial2} representation for $v_{m}^{\epsilon}(\bx,\omega)$, we choose $\xi_i\in [-\sqrt{3},\sqrt{3}]$, $i=1,...,m$ so that the conditions $\EEp{\xi_i}=0$, $\EEp{\xi_i\xi_j}=\delta_{ij}$ are satisfied. The same convergence result 
can be obtained with little modification of the current proof.
\end{remark}

\section{Numerical examples} \label{sec:NumericalExamples}
\noindent
In this section, we conduct numerical experiments to test the accuracy and the efficiency of our method. 
Specifically, we will present convergence tests with respect to the physical meshsize,  the number of multiscale reduced basis functions, and the number
of qMC samples. In addition, we will investigate the existence of Anderson localization in both 1D and 2D. For convenience, we first introduce $ L^2 $ norm and $ H^1 $ norm as
\[
||\psi^{\epsilon}||^2_{L^2}=\int_{D}|\psi^{\epsilon}|^2 \d\bx, \quad ||\psi^{\epsilon}||^2_{H^1}=\int_{D}|\nabla \psi^{\epsilon}|^2 \d\bx + \int_{D}|\psi^{\epsilon}|^2 \d\bx.
\]
In what follows, we compare the relative error between expectations of the numerical solution $\psi^{\epsilon}_{\textrm{num}}$ and the reference solution $\psi^{\epsilon}_{\textrm{ref}}$ in both $ L^2 $ norm and $ H^1 $ norm
\begin{align*}
\textrm{Error}_{L^2}& =\dfrac{||\mathds{E}[\psi^{\epsilon}_{\textrm{num}}]-\mathds{E}[\psi^{\epsilon}_{\textrm{ref}}]||_{L^2}}
{||\mathds{E}[\psi^{\epsilon}_{\textrm{ref}}]||_{L^2}},\\
\textrm{Error}_{H^1}& =\dfrac{||\mathds{E}[\psi^{\epsilon}_{\textrm{num}}]-\mathds{E}[\psi^{\epsilon}_{\textrm{ref}}]||_{H^1}}
{||\mathds{E}[\psi^{\epsilon}_{\textrm{ref}}]||_{H^1}}.
\end{align*}
Here $\mathds{E}[\psi^{\epsilon}_{\textrm{num}}]=\int_{\Omega}\psi^{\epsilon}_{\textrm{num}}(t,\bx,\omega)\d \rho(\omega)$, $ \mathds{E}[\psi^{\epsilon}_{\textrm{ref}}]=\int_{\Omega}\psi^{\epsilon}_{\textrm{ref}}(t,\bx,\omega)\d\rho(\omega) $, $\Omega$ is the random space, and $\rho(\omega) $ is the probability measure induced by the randomness in \eqref{KLE-poterntial2}. 
The reference solution refers to the numerical wavefunction using a very fine mesh and a large amount of qMC samples.
In numerical experiments, we use the MATLAB's Statistics Toolbox to generate the Sobol sequence to implement the qMC method. 
When we use the POD method to construct multiscale reduced basis functions, we observed similar decay behaviors of the associated eigenvalues 
at each coarse grid point. Therefore, we choose the same reduced basis number $m_k$ for all the coarse grid points. 
 
\subsection{Convergence in the physical space} \label{sec:PhysicalSpace}
\noindent 
Consider the 1D Schr\"{o}dinger equation over $ D=[-\pi,\pi] $ 
\begin{align}
i\epsilon\partial_t\psi^\epsilon&=-\frac{\epsilon^2}{2}\partial_{xx}\psi^\epsilon+v^{\epsilon}(x,\omega) \psi^\epsilon,
\label{SchrodingEq-1d}
\end{align}
where the periodic condition is imposed, the initial data $ \psi_{\textrm{in}}(x)=(\frac{10}{\pi})^{1/4}e^{-20(x-0)^2} $, and the random potential  $ v^{\epsilon}(x,\omega) $ is defined as
\begin{align}
v^{\epsilon}(x,\omega)=1+\sigma \sum_{j=1}^{3} \sin(jx^2)\sin(\frac{x}{E_j})\xi_j(\omega). 
\label{random-potential-1d}
\end{align}
In the random potential \eqref{random-potential-1d}, $\sigma$ is used to control the strength of the random potential, and $\xi_j(\omega)$'s are mean-zero and independent random variables uniformly distributed in $ [-\sqrt{3},\sqrt{3}] $. Moreover, we choose $ \veps=\frac{1}{16} $, $\sigma=1$ and $ E=[\frac{1}{9},\frac{1}{13},\frac{1}{11}] $, i.e., the characteristic length scale of randomness is
different from the semiclassical parameter.

\textbf{Convergence with respect to the coarse mesh size $ H $.}  In our numerical test, we set the final computational time $ T=1 $. For the reference solution, we choose the fine mesh to be $ h=\frac{2\pi}{2048} $ and the qMC sample number to be $ n=16000 $. For our method, we choose the POD modes $ m_k=3$, the sampling number in the offline training stage to be $200$ and the number of qMC samples in the online stage to be $2560$. 

In Table \ref{table_physical}, we compute the relative errors of the expectation of the wavefunction in
both $L^2$ norm and $H^1$ norm for a series of coarse meshes with meshsize ranging from $ H=\frac{2\pi}{32} $ to $ H=\frac{2\pi}{256} $. 
Nice convergence in the physical space is observed.
	\begin{table}[H]
	\centering
	\begin{tabular}{|r|r|r|r|r|}
		\hline
		$ H $ & $ \textrm{Error}_{L^2} $ & Order & $ \textrm{Error}_{H^1} $ & Order \\
		\hline
		$ 2\pi/32  $ &0.09862312&       &0.32096262&     \\
		$ 2\pi/64  $ &0.00129644& 6.25  &0.01449534& 4.47\\
		$ 2\pi/128 $ &0.00002892& 5.49  &0.00076150& 4.25\\
		$ 2\pi/256 $ &0.00000950& 1.61  &0.00014161& 2.42\\
		\hline
	\end{tabular}%
	\caption{Relative $L^2$ and $H^1$ errors for the expectation of the wavefunction when $ \epsilon=1/16 $.}
	\label{table_physical}%
\end{table}
 
\textbf{Verification of the exponential decay of multiscale basis functions.} For the same problem as above, we choose four different realizations of the multiscale basis functions centered at $x=0$, i.e. $ \phi(x,\boldsymbol{\xi}(\omega_i)) $, $i=1,2,3,4$, which are generated in the offline training stage of our previous experiment when $ H=\frac{2\pi}{256} $. In Figure \ref{fig:gradienta}, we plot $|\nabla\phi(x,\boldsymbol{\xi}(\omega_i))| / ||\nabla\phi(x,\boldsymbol{\xi}(\omega_i))||_{L_2(D)}$, $i=1,2,3,4$. In Figure \ref{fig:exponentialb}, we plot the quantity $ E_{\textrm{relative}}=\frac{||\nabla \phi(x,\boldsymbol{\xi}(\omega_i)) ||_{L_2(D)}-||\nabla \phi(x,\boldsymbol{\xi}(\omega_i)) ||_{L_2(D_{\ell})}}{\max(||\nabla \phi(x,\boldsymbol{\xi}(\omega_i)) ||_{L_2(D)}-||\nabla \phi(x,\boldsymbol{\xi}(\omega_i)) ||_{L_2(D_{\ell})})}$ with respect to the patch size $\ell$, which shows the decay rate of $E_{\textrm{relative}}$ with respect to $\ell$. 

One can see that each realization of the multiscale basis functions decays exponentially fast away from the center $x=0$. Since the multiscale basis functions have exponential decay property, the approximated multiscale basis using the reduced basis functions (see \eqref{RB_expansion}) still has the same property.  
 
 \begin{figure}[htbp]
		\centering
		\begin{subfigure}{0.49\textwidth}
			\centering
			\includegraphics[width=\textwidth]{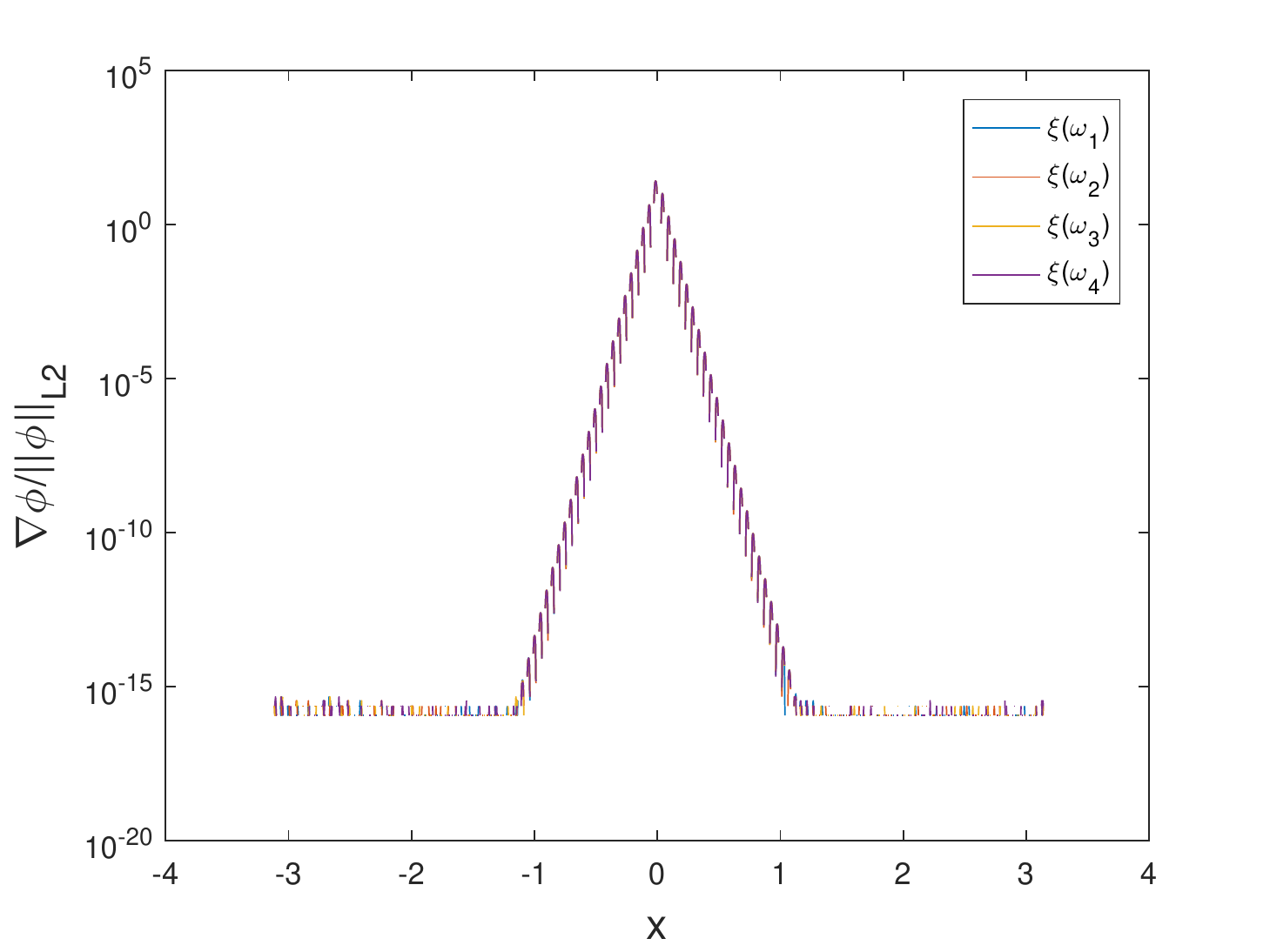}
			\caption{ {\scriptsize $\nabla\phi / ||\phi||_{L_2}$ with respect to the distance to $x=0$}}
			\label{fig:gradienta}
		\end{subfigure}
		\begin{subfigure}{0.49\textwidth}
			\centering
			\includegraphics[width=\textwidth]{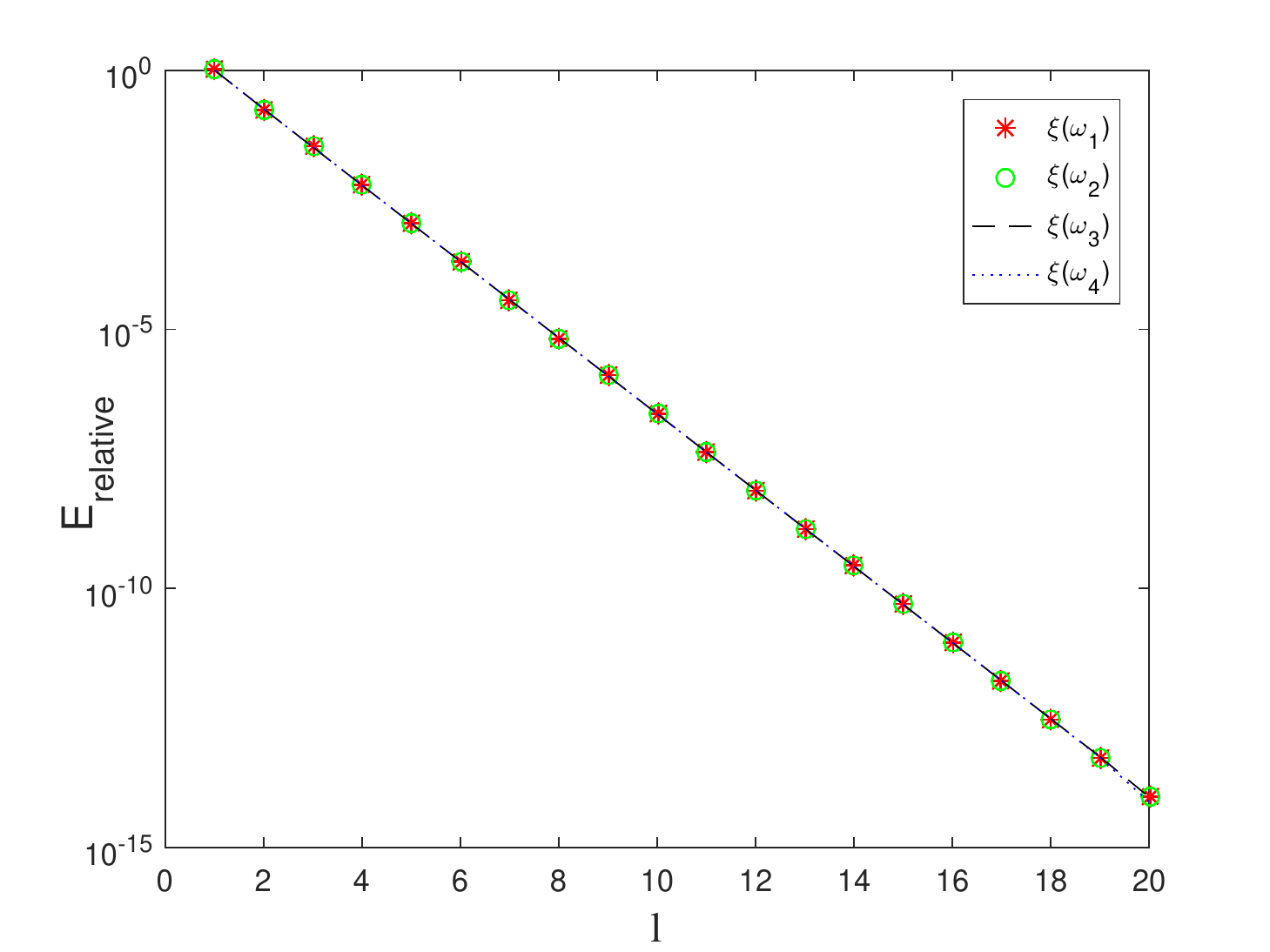}
		    \caption{{\scriptsize $ E_{\textrm{relative}}=\frac{||\nabla \phi(x,\xi(\omega_i)) ||_{L_2(D)}-||\nabla \phi(x,\xi(\omega_i)) ||_{L_2(D_{\ell})}}{\max(||\nabla \phi(x,\xi(\omega_i)) ||_{L_2(D)}-||\nabla \phi(x,\xi(\omega_i)) ||_{L_2(D_{\ell})})}$.}}
			\label{fig:exponentialb}
		\end{subfigure}
		\caption{Exponentially decaying properties of the multiscale basis functions for four different realizations. 
		}
		\label{exponential_decay}
	\end{figure} 

\textbf{Convergence with respect to the number of multiscale reduced basis functions.} We study how the approximation error depends on the number of multiscale reduced basis used at each coarse mesh node $\bx_k$, i.e., changing the POD modes $ m_k $. Again, we solve \eqref{SchrodingEq-1d} - \eqref{random-potential-1d} when $ \veps=\frac{1}{16} $, $\sigma=1$ and $ E=[\frac{1}{9},\frac{1}{13},\frac{1}{11}] $. The final computational time $ T=1 $. 
For the reference solution, we choose the meshsize to be $ h=\frac{2\pi}{2048} $ and the number of qMC samples to be $ n=16000 $. For our method, we choose the  number of samples in the offline training stage to be $200$ and the number of qMC samples in the online stage to be $2560$. We fix the coarse mesh size $ H=\frac{2\pi}{128} $ and record the relative errors as a function of the number of multiscale reduced basis functions.

In Figure \ref{fig:POD}, we plot the relative $L^2$ and $H^1$ errors with respect to the number of multiscale reduced basis functions. It is observed that  
results when $ m_k=2 $ or $ m_k=3 $ have already been good enough in the sense that relative errors are less than $1\%$. 
These numerical results indicate that multiscale reduced basis functions can efficiently approximate the physical space of the wavefunction. 
 
      \begin{figure}[htbp]
		\centering
		\begin{subfigure}{0.49\textwidth}
			\centering
			\includegraphics[width=\textwidth]{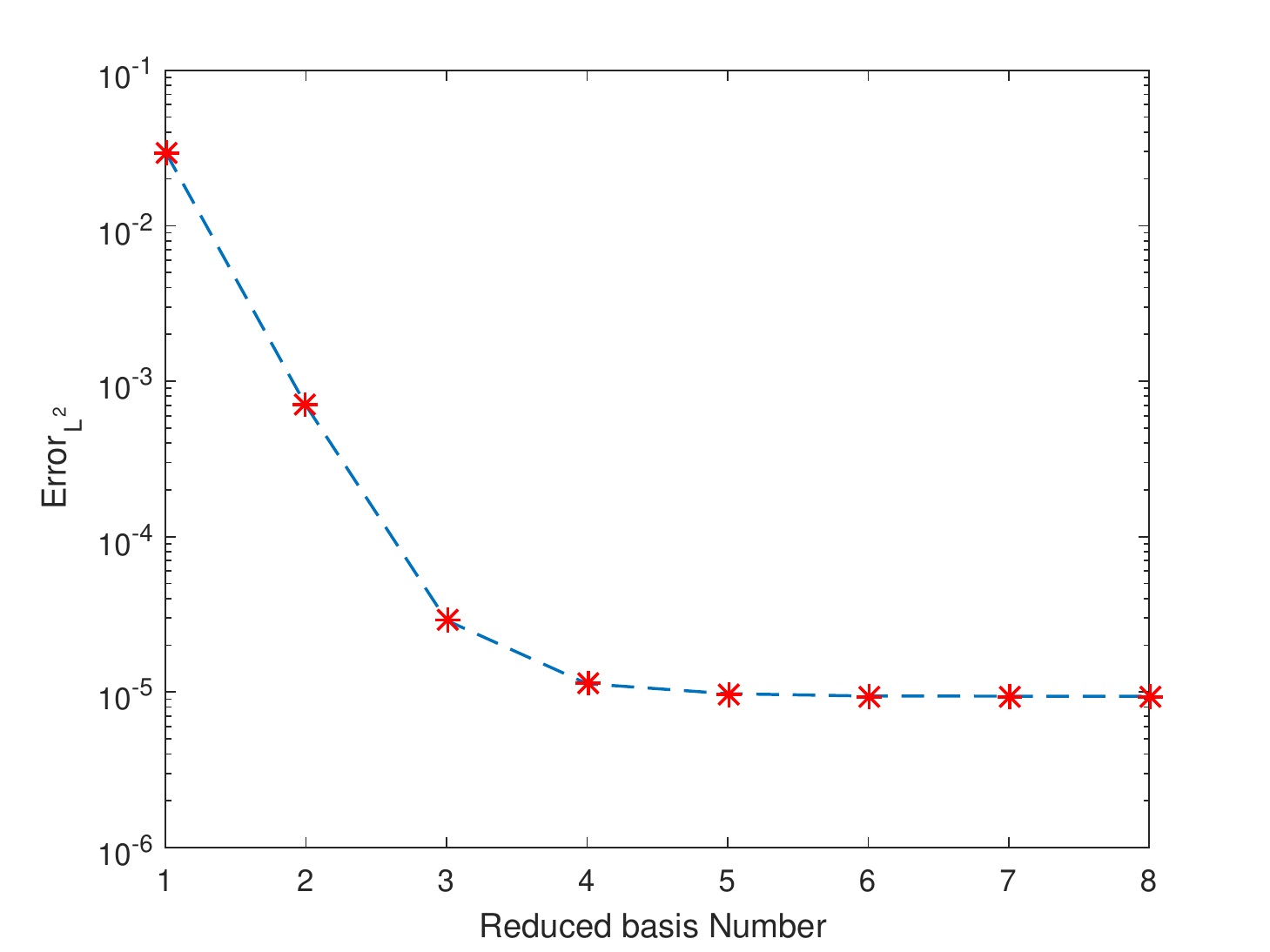}
			\caption{Relative error in $ L^2 $ norm.}
			\label{fig:POD_L2}
		\end{subfigure}
		\begin{subfigure}{0.49\textwidth}
			\centering
			\includegraphics[width=\textwidth]{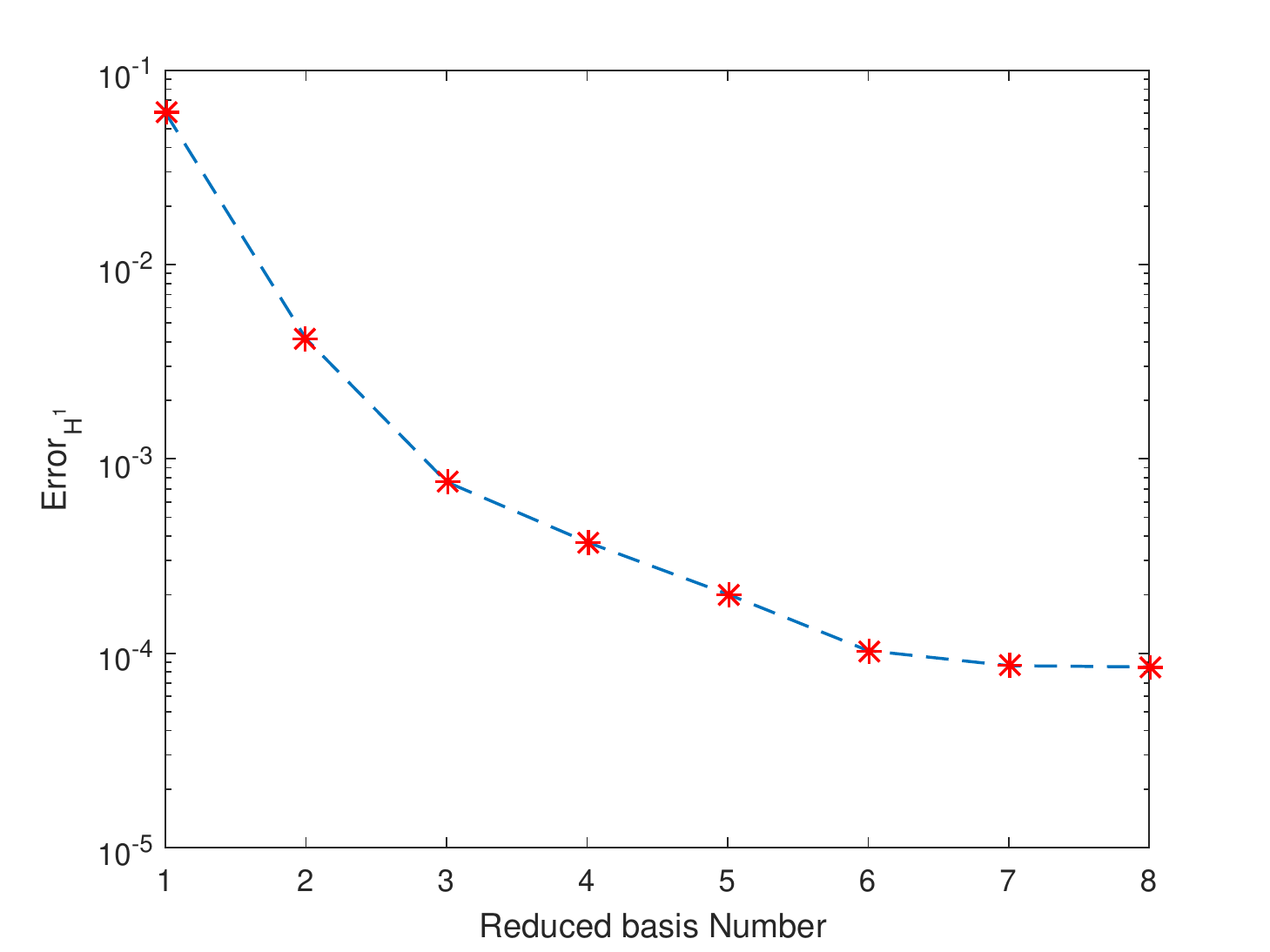}
			\caption{Relative error in $ H^1 $ norm.}
			\label{fig:POD_H1}
		\end{subfigure}
		\caption{Relative errors with respect to the number of the multiscale reduced basis functions. }
		\label{fig:POD}
	\end{figure}

\subsection{Convergence in the random space} \label{sec:RandomSpace}
\noindent
Again, we use the same example: \eqref{SchrodingEq-1d} - \eqref{random-potential-1d} and $ D=[-\pi,\pi] $, but we shall focus on the convergence 
of our method in random space. 
 
\textbf{Convergence with respect to the number of qMC samples.}  In this numerical experiment, parameters of the random potential are the same as those in
\secref{sec:PhysicalSpace}, i.e., $\sigma=1$ and $ E=[\frac{1}{9},\frac{1}{13},\frac{1}{11}] $. Set $ \veps=\frac{1}{16} $ and the final time $T=1$. For the reference solution, we choose the meshsize to be $ h=\frac{2\pi}{2048} $ and the number of qMC samples to be $ n=16000 $. For our method, we choose the coarse meshsize to be $ H=\frac{2\pi}{256} $ and the number of multiscale reduced basis functions to be $ m_k=4$, such that the error in the physical space be small enough. To study the convergence rate of the qMC method, we change the number of the qMC samples successively from $ n=160 $ to $ n=5120 $ and compute the relative $L^2$ errors. We also compute the relative errors of the MC method with the same setting in the physical space and the same number of samples.

In Figure \ref{fig:QMC}, we show the convergence result of our method. We find that the convergence rate of the qMC method is close to $O(n^{-1})$, which is consistent with results in \textbf{Lemma \ref{lemma:qmc2}} and in \textbf{Theorem \ref{main-result}}. Meanwhile, we compare the performance of the qMC method and the MC method. One can see that the convergence rate of the MC method is close to $O(n^{-\frac{1}{2}})$, which is also consistent with the error estimate of the MC method. This result clearly show that qMC method is more accurate and efficient than the MC method.  
\begin{figure}[htbp]
	\centering
	\includegraphics[width=0.50\linewidth]{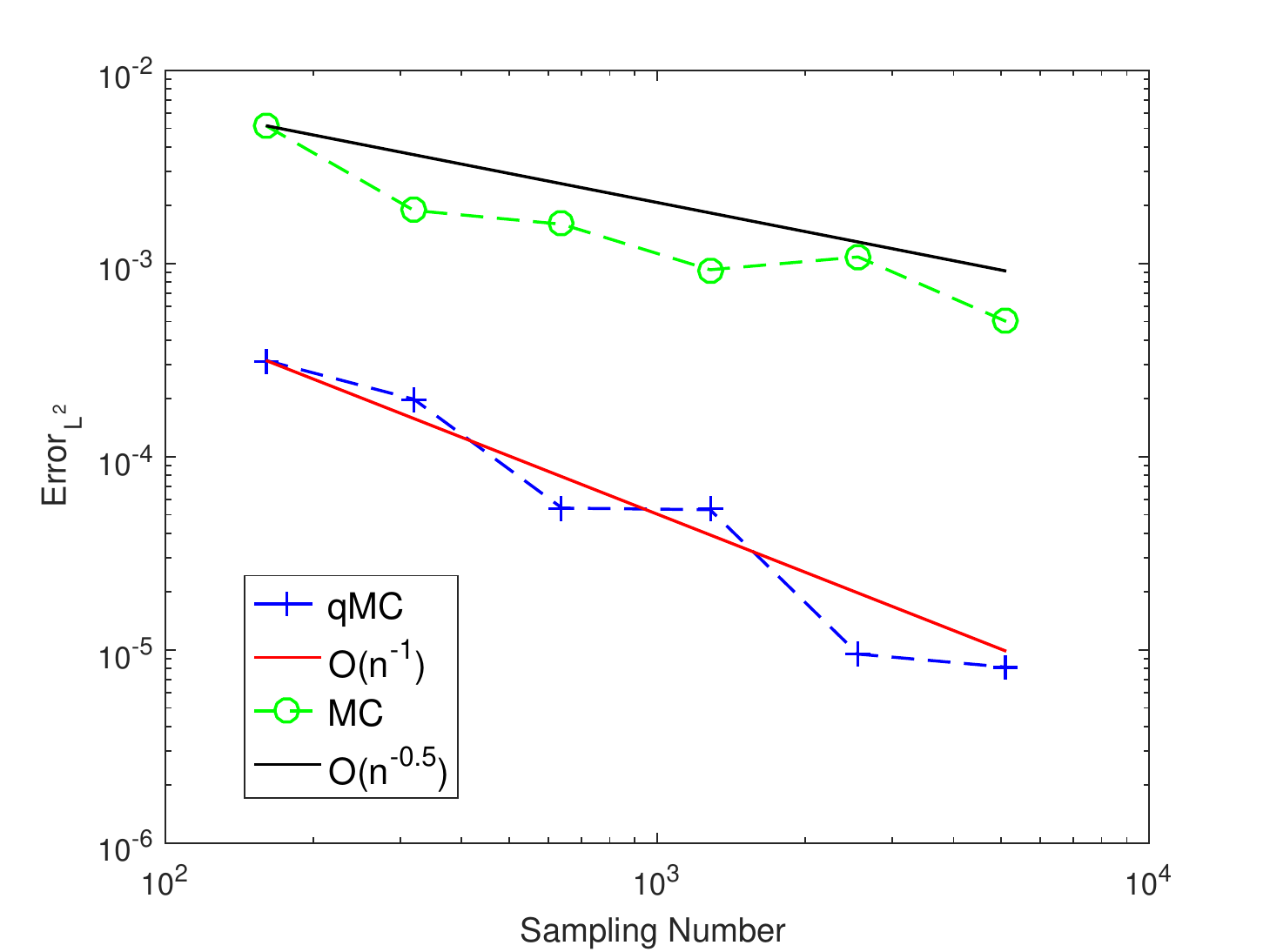}
	\caption{Comparison of the qMC method and the MC method. Convergence rate for qMC and MC are $1.13$ and $0.57$, respectively.}
	\label{fig:QMC}
\end{figure}

\textbf{Estimation of sampling numbers in the construction of multiscale reduced basis functions.} 
In \secref{sec:DetermineSamples}, we obtain qualitative estimates on the choice of sampling numbers in the construction of multiscale reduced basis functions; see \eqref{asd} and \eqref{basis-potential}. In this experiment, we first generate $Q$ qMC samples of the random potential: $\{v^{\epsilon}(x,\omega_q)\}_{q=1}^{Q}$. Then, for each sample $v^{\epsilon}(x,\omega_q)$, we compute the corresponding multiscale basis functions. Finally, we construct multiscale reduced basis functions using the POD method. In the online stage, we solve \eqref{Schm} using the obtained multiscale reduced basis functions. The numerical setting for the reference solution is the same as before. For our method, we choose $ H=\frac{2\pi}{128} $, $ m_k=3$, and $n=2560$.

In Table \ref{table_learning}, we show relative errors of numerical solutions obtained using different sampling numbers of the random potential. When the sampling number $Q$ is small, say $Q=10$, the error is big and the corresponding multiscale reduced basis functions cannot approximate the random space of the wavefunction well. When we increase $Q$, i.e., add more samples of the random potential in the construction of multiscale reduced basis functions, we obtain much better results. Notice that $m_k$ is fixed to be $3$. This means when $Q$ is of order $100$, the sampling number of the random potential is large enough to ensure the excellent approximation accuracy of multiscale reduced basis functions. One interesting topic on this issue is an optimal sampling strategy in the construction of multiscale reduced basis functions, which will be explored in a subsequent work.
	
\begin{table}[htbp]
	\centering
	\begin{tabular}{|r|r|r|}
		\hline
		qMC number & $ \textrm{Error}_{L^2} $ &$ \textrm{Error}_{H^1} $\\
		\hline
		$ 10  $ &0.11800774 & 0.46614288\\
		$ 100 $ &0.00136249 & 0.01497658 \\
		$ 200 $ &0.00130909 & 0.01455442 \\
		$ 400 $ &0.00129678 & 0.01449570 \\
		\hline
	\end{tabular}%
	\caption{Relative $L^2$ and $H^1$ errors in terms of sampling numbers of the qMC method in the offline stage.}
	\label{table_learning}%
\end{table}

\textbf{Dependence of the number of qMC samples on $ \epsilon $ and dimension of the random space $ m $.} 
We use the random potential $ v^{\epsilon}(x,\omega) $ with decaying terms satisfying \textbf{Assumption} \ref{regularity-decay-kle}:
\begin{align}
	v^{\epsilon}(x,\omega)=1+\sum_{j=1}^{m}\frac{1}{j^2}\sin(jx)\xi_j(\omega)
\end{align}
in 1D physical domain $ D=[-\pi,\pi] $ and $\xi_j(\omega)$'s are mean-zero and independent random variables uniformly distributed in $ [-\sqrt{3},\sqrt{3}] $. 

Firstly, we set random dimension to be $ m=8 $, the final time $ T=1$. Three values of $\epsilon=\frac{1}{4}, \frac{1}{8}$ and $\frac{1}{16}$ are tested. The reference solution is obtained in the same way as before. For the numerical solution we use the same fine mesh as that for the reference solution but different number of qMC samples. In Table \ref{table_qMC1_beta2}, we list the number of qMC samples with respect to $\epsilon$
for the same accuracy requirement. It is observed that the number of qMC samples increases proportionally to $1/\veps^{2.5}$.
\begin{table}[H]
	\centering
	\begin{tabular}{|r|r|r|r|}
		\hline
		$\epsilon$ & qMC number & $ \textrm{Error}_{L^2} $ &$ \textrm{Error}_{H^1} $\\
		\hline
		$ 1/4  $ &160  & 0.00469003&0.00654782\\
		$ 1/8  $ &960  & 0.00399369&0.00767395\\
		$ 1/16 $ &5120 & 0.00444144&0.00785192\\
		\hline
	\end{tabular}%
	\caption{{\small Number of qMC samples for different $\epsilon$ under the same accuracy requirement.}}
	\label{table_qMC1_beta2}%
\end{table}

Secondly, we fix $\epsilon=\frac{1}{16}$ and change the dimension of the random space from $ m=1 $, $ m=2 $, $ m=4 $, to $ m=8 $. The reference solution and numerical solution are obtained in the same way as above. In Table \ref{table_qMC2_beta2}, we list the number of qMC samples with respect to $m$ for the same accuracy requirement. A linear growth of the number of qMC samples is observed when $m$ is increased.
 
\begin{table}[H]
	\centering
	\begin{tabular}{|r|r|r|r|}
		\hline
		Dimension $ m $ & qMC number & $ \textrm{Error}_{L^2} $ &$ \textrm{Error}_{H^1} $\\
		\hline
		$ 1  $ &520  & 0.00405535&0.00784524\\
		$ 2  $ &1280 & 0.00341203&0.00667093\\
		$ 4  $ &2560 & 0.00369911&0.00823515\\
		$ 8  $ &5120 & 0.00444144&0.00785192\\
		\hline
	\end{tabular}%
	\caption{{\small Number of qMC samples for different dimension $m$ under the same accuracy requirement.}}
	\label{table_qMC2_beta2}%
\end{table}

A slower decay of eigenvalues in the KL expansion of the random potential requires more qMC samples. For instance,  when $v^{\epsilon}(x,\omega)=1+\sum_{j=1}^{m}\frac{1}{j}\sin(jx)\xi_j(\omega)$, we observed a quadratic growth of the number of qMC samples 
when $m$ is increased. However, the qMC method is still very efficient in solving this difficult problem. 
Moreover, the qMC method can be implemented in a parallel fashion to further improve its efficiency.
 
\subsection{Investigation of Anderson localization.} \label{sec:AndersonLocalization}
\noindent 
In this section, we investigate the Anderson localization phenomenon for the semiclassical Schr\"{o}dinger equaiton using our method. 
Physically, when the Anderson localization happens, the electron transport stops under the strong disorder and the short-range correlation in space.
We emphasize that the short-range correlation is important for localization, while the long-range correlation may lead to delocalization \cite{Erdos:10,nosov2019correlation}. To numerically measure the localization of a wavefunction, we define 
\begin{align} 
A(t)=\mathds{E}\left[\int_{D}|\bx|^2|\psi^\epsilon(t,\bx,\omega)|^2\mathrm{d}\bx\right],
\label{second_moment}
\end{align}
where $\bx=x$ when $d=1$ and $\bx=(x_1,x_2)$ when $d=2$.

	
\textbf{1D Schr\"{o}dinger equation.} Consider the Schr\"{o}dinger equation \eqref{SchrodingEq-1d} with the periodic boundary condition over $ D=[-\pi,\pi] $. 
To approximate the spatially white noise in the potential, we employ the $m$-term KL expansion
\begin{align}
   v^{\epsilon}(x,\omega)=\sigma\sum_{j=1}^{m} \sin(jx) \frac{1}{j^{\beta}}\xi_j(\omega),
   \label{KLE-potential-1D-AL}
\end{align}
where $\xi_j(\omega)$'s are mean-zero and i.i.d. random variables uniformly distributed in $ [-\sqrt{3},\sqrt{3}] $. When $\beta=0$, 
$v^{\epsilon}(x,\omega) $ converges to the spatially white noise as $ m \rightarrow\infty$.  $ \sigma $ controls the strength of randomness. 

The setup is as follows: the fine scale meshsize $ h=\frac{2\pi}{600} $, the coarse meshsize $ H=\frac{2\pi}{100} $, $ \veps=\frac{1}{8} $, 
$\sigma=5$, and the initial data $ \psi_{\textrm{in}}(x) $ is 	
\begin{align}\label{eqn:ic1d}
	\psi_{\textrm{in}}(x)=(\frac{10}{\pi})^{1/4}e^{-20(x-0)^2}.
\end{align}
In Figure \ref{fig:beta0}, we plot $ A(t) $ as a function of $t$ for different $ m $ when $\beta=0 $. When $ m $ increases, 
the wavefunction quickly enters a localization phase. In Figure \ref{fig:beta1}, 
we plot the time evolution of $ A(t) $ for different $ m $ when  $\beta=1 $. 
Notice that $\beta=1 $ leads to a slower decay in the KL expansion of the random potential \eqref{KLE-potential-1D-AL}. 
Therefore, more terms need to be added to the KL expansion in order to generate a localization phase for the wavefunction.
We also plot the time evolution of $A(t)$ for $\beta$ ranging from $ 0 $ to $ 1.5 $ when $\sigma=5$, $ m=15 $ in Figure \ref{fig:N15}.
The localization phase is much easier to be approached as $\beta$ goes to $0$. Besides, we also observe that a larger $\sigma$
makes the wavefunction approach the localization phase more quickly with other parameters fixed.
To sum up, the localization phase can be approached easier when we have more terms in the KL expansion,  shorter range of randomness,
or stronger randomness.

\begin{figure}[htbp]
		\centering
		\begin{subfigure}{0.49\textwidth}
			\centering
			\includegraphics[width=\textwidth]{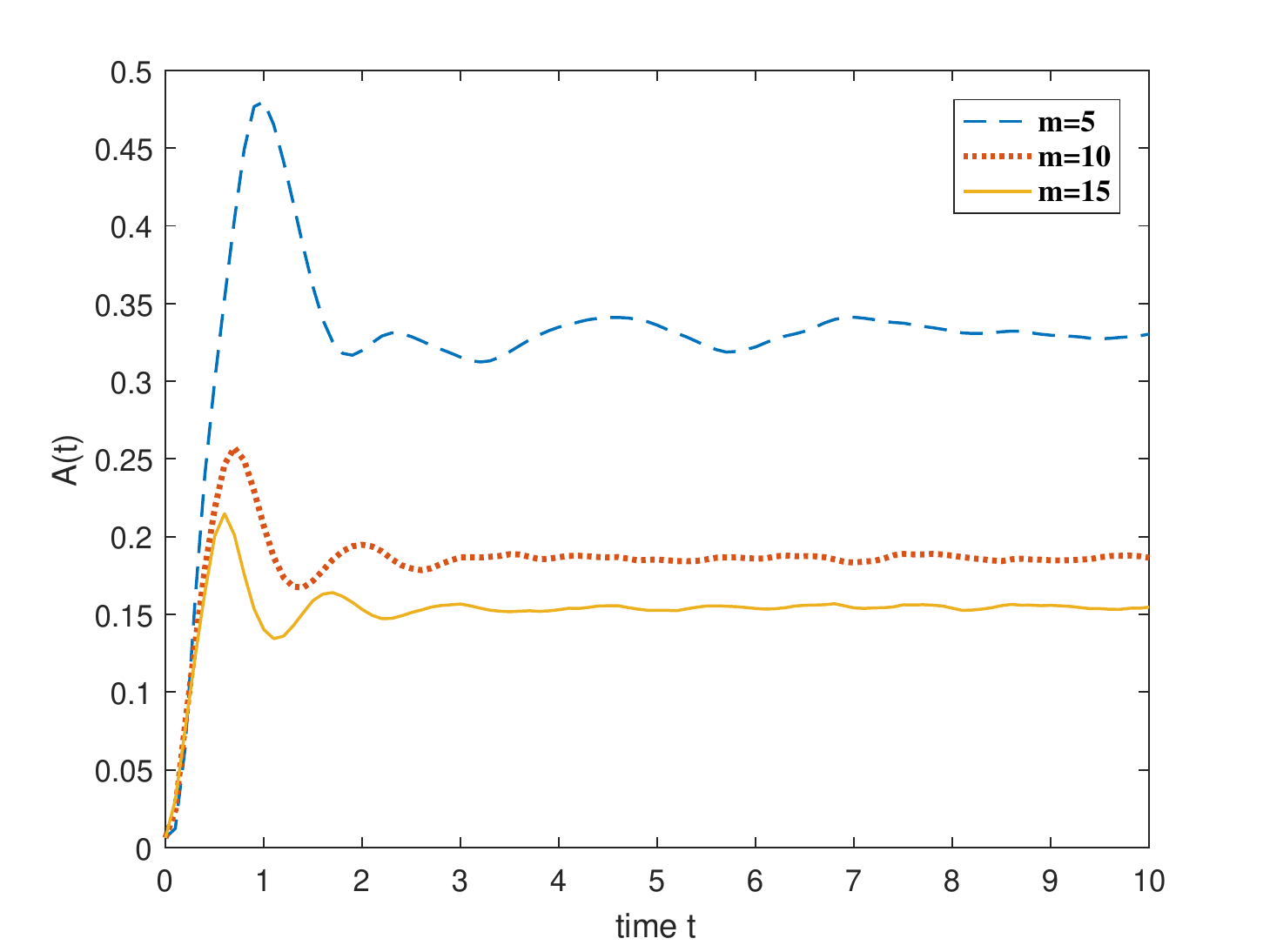}
		    \caption{$\beta=0$}
			\label{fig:beta0}
		\end{subfigure}
		\begin{subfigure}{0.49\textwidth}
			\centering
			\includegraphics[width=\textwidth]{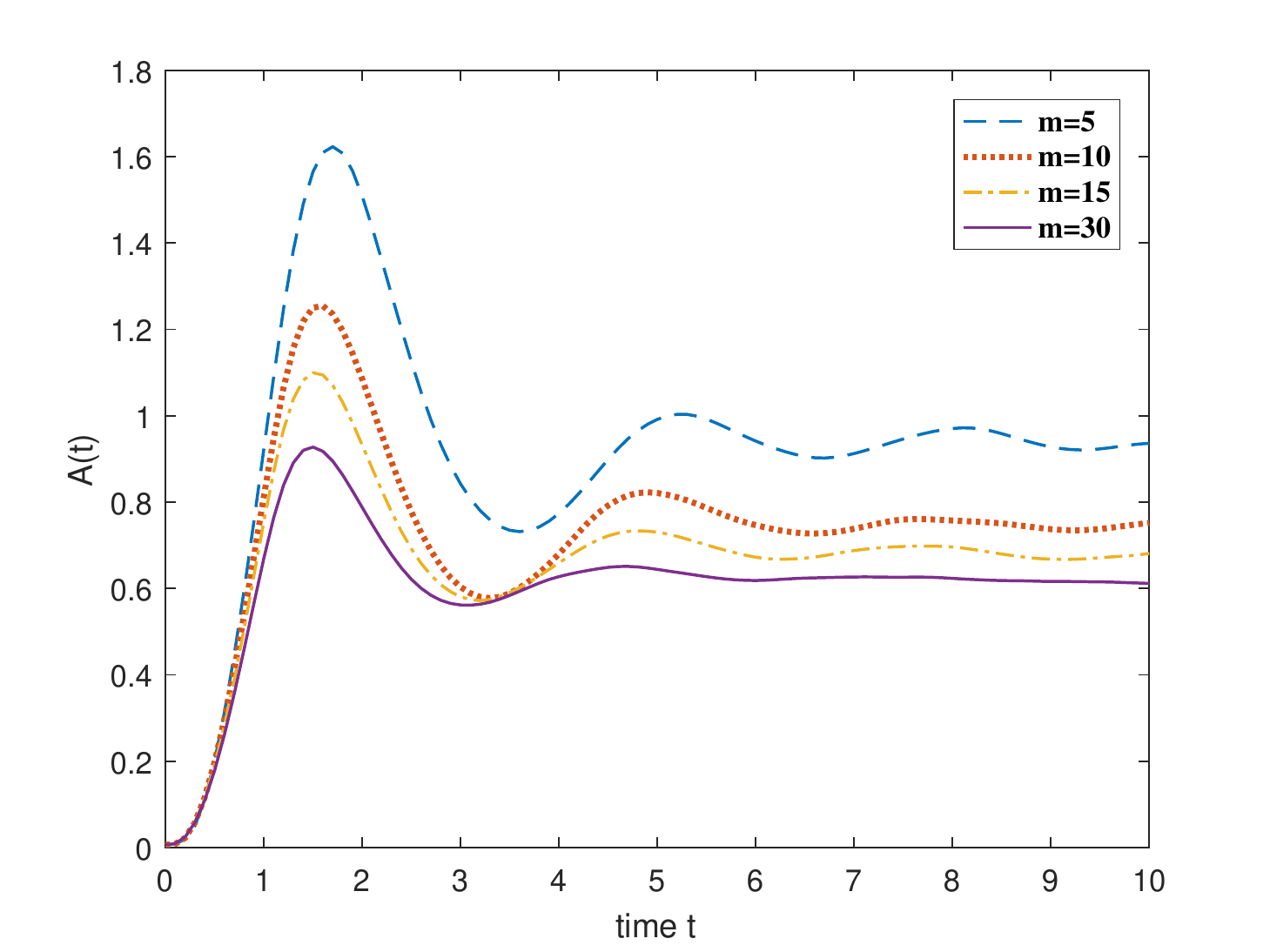}
			\caption{$\beta=1$}
			\label{fig:beta1}
		\end{subfigure}
		\caption{Anderson localization for different parameters. 
		}
		\label{fig:kle}
\end{figure}

\begin{figure}[htbp]
		\centering
		\includegraphics[width=0.50\linewidth]{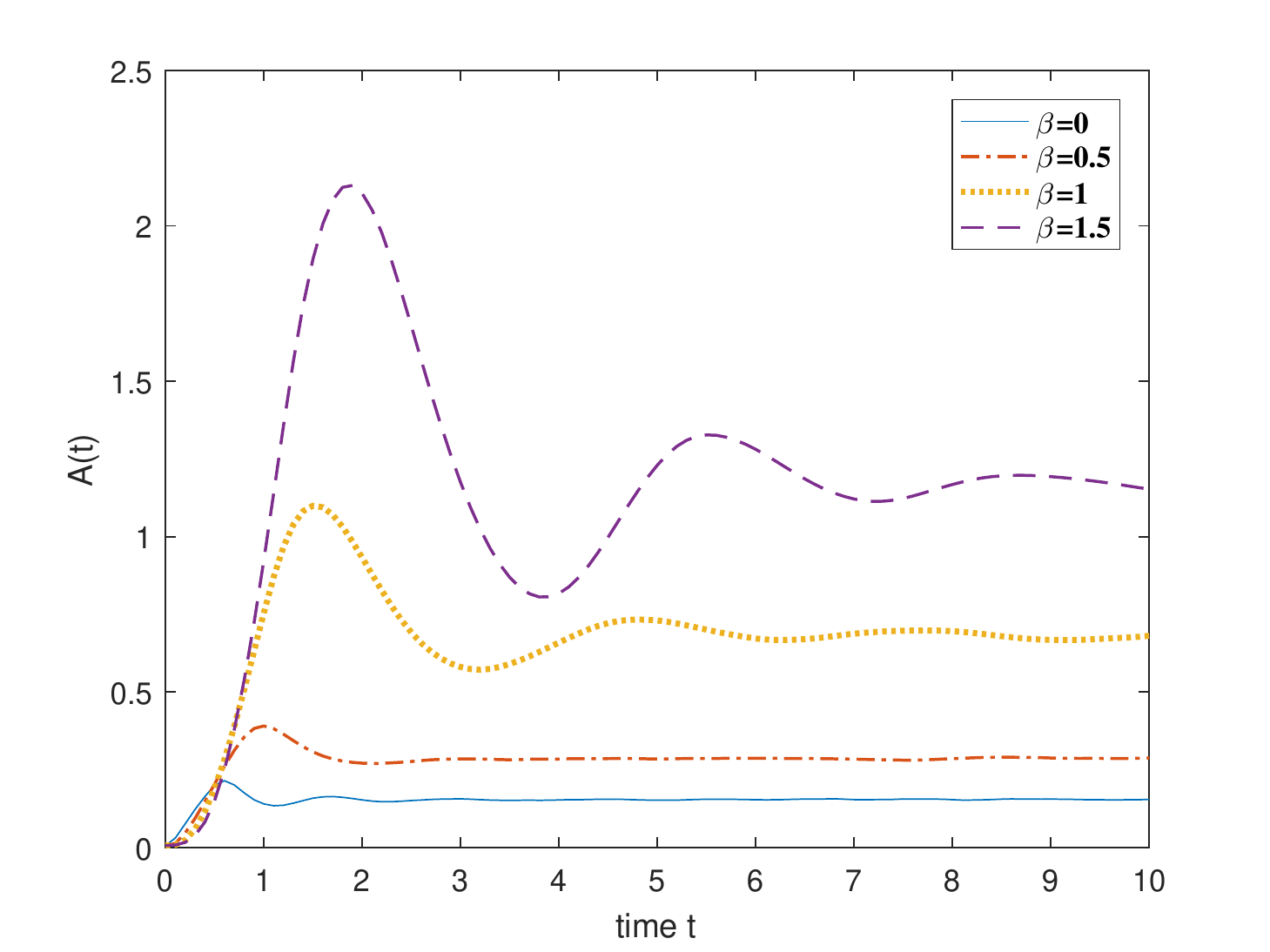}
		\caption{Anderson localization for different $\beta$.}
		\label{fig:N15}
\end{figure}

\textbf{2D Schr\"{o}dinger equation.} Consider the Schr\"{o}dinger equation \eqref{Schm} over $ D=[-\pi,\pi]\times[-\pi,\pi] $ and
\begin{align}
	v^{\epsilon}(x_1,x_2,\omega)=\sigma\sum_{j=1}^{m} \sin(jx_1) \sin(jx_2)\frac{1}{j^{\beta}}\xi_j(\omega),
	\label{KLE-potential-2D-AL}
\end{align}
where the setting of $\xi_j(\omega)$'s is the same as the 1D case. $\sigma$, $ m $ and $\beta$ are parameters that controls the random potential. 

Choose $\sigma=5$, $\beta=0$ and $ \veps=\frac{1}{4} $. Notice that $\beta=0$ and \eqref{KLE-potential-2D-AL} is used to model a short-range random potential. For our method, the fine meshsize is $ h=\frac{2\pi}{400} $ and the coarse meshsize is $ H=\frac{2\pi}{100} $. In Figure \ref{fig:le2d}, we plot the time evolution of $ A(t) $ when $ m=10 $. One can see that the wavefunction approaches a localization phase when $t=4$. We remark that it is computationally expensive to solve the 2D Schr\"{o}dinger equation with random potentials. The proposed method, however, is efficient to solve this problem.
 
\begin{figure}[htbp]
		\centering
		\includegraphics[width=0.50\linewidth]{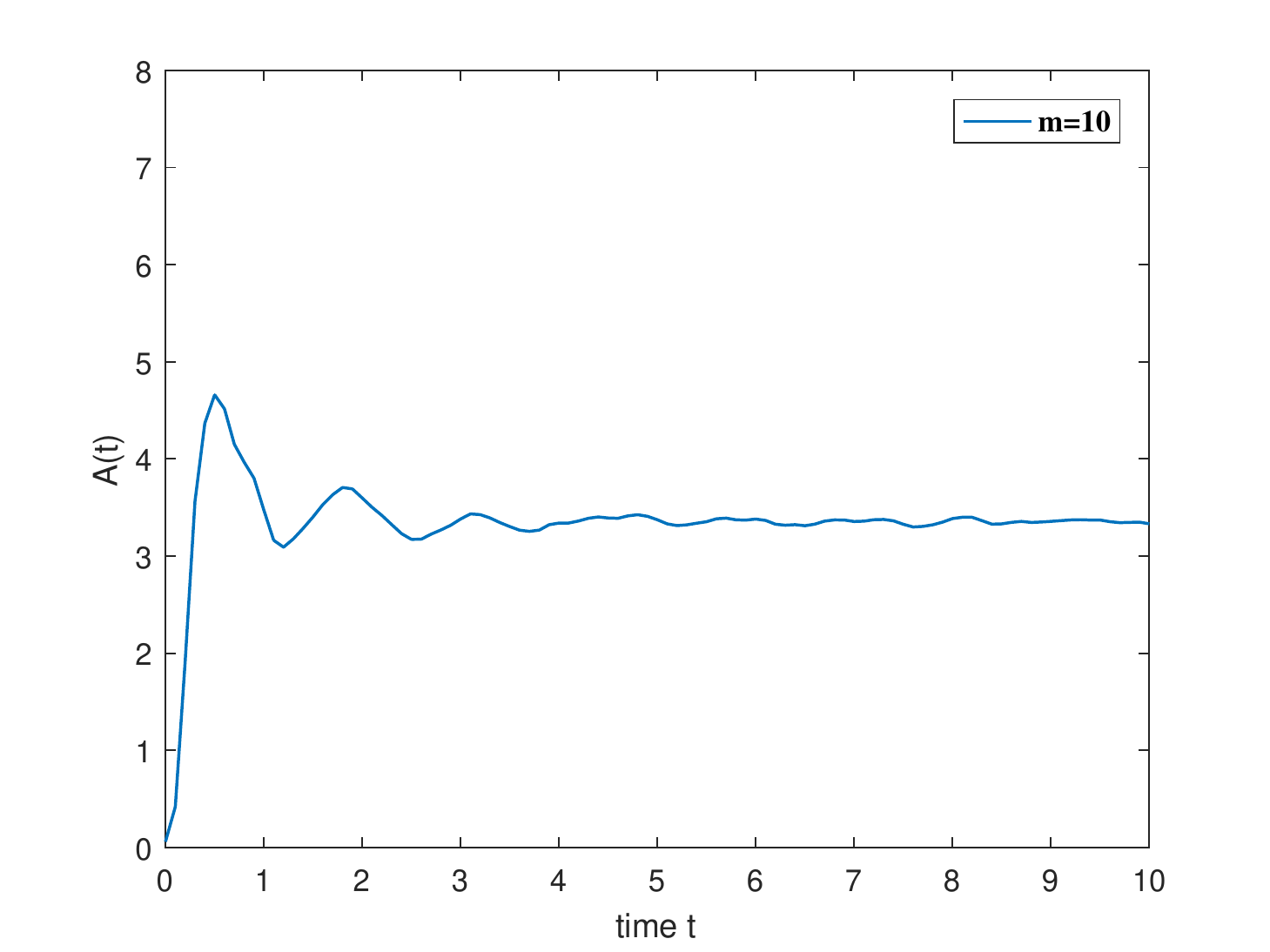}
		\caption{Anderson localization when $\sigma=5$ and $\beta=0$ in 2D.}
		\label{fig:le2d}
\end{figure}

\section{Conclusions and discussions} \label{sec:Conclusion}
\noindent
In this paper, we propose a multiscale reduced basis method to solve the Schr\"{o}dinger equation with random potential in the semiclassical regime.
The physical space of the solution is approximated by a set of localized multiscale basis functions based on an optimization approach. The proper orthogonal
decomposition method is then applied to extract a smaller number of multiscale reduced basis functions to further reduce the computational cost without loss
of approximation accuracy. The number of samples to learn the multiscale reduced basis functions is also analyzed, which provides guidance in practical
computations. The quasi-Monte Carlo method is employed to approximate the random space of the solution. Approximation accuracy of the proposed method
is analyzed. It is observed that the spatial gridsize is proportional to the semiclassical parameter and the number of samples is inversely proportional to the same parameter.  Finally we present several numerical examples to demonstrate the accuracy and efficiency of the proposed method. 
Moreover, we investigate the Anderson localization phenomena for Schr\"{o}dinger equation with correlated random potentials in both 1D and 2D.

There are two lines of work which deserve explorations in the near future.
Firstly, in the physics community, the random Schr\"{o}dinger equation in higher dimensions (2D and 3D) has been frequently used to study Anderson localization;
see \cite{filoche2012universal} for example.  Though the random potential is assumed to be white noise without 
spatial correlation in the original paper \cite{Anderson:58}, correlated random potential is also found to generate localized states; see \cite{devakul2017anderson}
for example. In the mathematics community, it is also known that the existence or nonexistence of Anderson localization for some types of 3D Schr\"{o}dinger equations with random potentials remains open \cite{Erdos:10}. It is thus quite interesting to explore this issue from a numerical perspective. 
Secondly, we plan to solve the Helmholtz equation in random media using the multiscale reduced basis basis method developed in this paper.

\section*{Acknowledgements}
\noindent
J. Chen acknowledges the financial support by National Natural Science Foundation of China via grant 21602149.	Z. Zhang acknowledges the financial support of Hong Kong RGC grants (Projects 27300616, 17300817, and 17300318) and National Natural Science Foundation of China via grant 11601457, Seed Funding Programme for Basic Research (HKU), and Basic Research Programme (JCYJ20180307151603959) of The Science, Technology and Innovation Commission of Shenzhen Municipality. Part of the work was done when J. Chen was visiting Department of Mathematics, University of Hong Kong. J. Chen would like to thank its hospitality.
	
	
	\bibliographystyle{siam}
	\bibliography{MDJpaper}
	
	
\end{document}